%Triply critical revised

\documentclass[a4paper]{article}
\usepackage[top=30truemm, bottom=30truemm, left=30truemm, right=30truemm]{geometry}
\usepackage{amsmath, amssymb}
\usepackage{amsthm}
\usepackage{mathtools}
\usepackage{proof}
\theoremstyle{definition}
\newtheorem{theorem}{Theorem}

\newtheorem{lemma}[theorem]{Lemma}

\newtheorem{proposition}[theorem]{Proposition}
\newtheorem{remark}{Remark}
\usepackage{cases}
\usepackage{url}
\usepackage{authblk}
\pagestyle{empty}
\title{Ground states for $p$-fractional Choquard-type equations with critical local nonlinearity and doubly critical nonlocality}
\author{Masaki Sakuma\thanks{Email: masakisakuma0110@gmail.com}}
\affil{Graduate School of Mathematical Sciences, The University of Tokyo, \\ Meguro-ku, Tokyo, Japan}

\begin{document}
\maketitle
\begin{abstract}
We consider a $p$-fractional Choquard-type equation 
\[
(-\Delta)_p^s u+a|u|^{p-2}u=b(K\ast F(u))F'(u)+\varepsilon_g |u|^{p_g-2}u \quad\text{in $\mathbb{R}^N$},
\]
where $0<s<1<p<p_g\leq p_s^*$, $N\geq \max\{2ps+\alpha, p^2 s\}$, $a,b,\varepsilon_g\in (0,\infty)$, $K(x)= |x|^{-(N-\alpha)}$, $\alpha\in (0,N)$ and $F(u)$ is a doubly critical nonlinearity in the sense of the Hardy-Littlewood-Sobolev inequality. It is noteworthy that the local nonlinearity may also have critical growth. Combining Brezis-Nirenberg's method with some new ideas, we obtain ground state solutions via the mountain pass lemma and a new generalized Lions-type theorem.

\vspace{1ex}\par
{\flushleft{{\bf Keywords:} Choquard equation; Fractional $p$-Laplacian; Critical exponents; Variational method; Mountain pass lemma; Lions-type theorem}}
\end{abstract}
\section{Introduction}
In the present paper, we study a $p$-fractional Choquard-type equation
\begin{equation}\label{cho}
(-\Delta)_p^s u+a|u|^{p-2}u=b(K\ast F(u))F'(u)+\varepsilon_g |u|^{p_g-2}u \quad\text{in $\mathbb{R}^N$},
\end{equation}
where $0<s<1<p<p_g\leq p_s^*$; $N\geq \max\{2ps+\alpha, p^2 s\}$; $a,b,\varepsilon_g>0$ are positive constants; $(-\Delta)_p^s$ denotes the fractional $p$-Laplace operator, which is defined up to a normalization factor as
\[
(-\Delta)^s_p u(x)\coloneqq\lim_{\varepsilon\to +0}\int_{\mathbb{R}^N\setminus B_\varepsilon (x)}\frac{|u(x)-u(y)|^{p-2}(u(x)-u(y))}{|x-y|^{N+ps}}dy;
\]
and $K(x)=K_\alpha (x)=|x|^{-(N-\alpha)}$ ($\alpha\in (0,N)$) is the Riesz potential up to a scaling factor. In addition, 
\[
F(u)=\displaystyle\frac{1}{p^\downarrow}|u|^{p^\downarrow}+ \frac{1}{p^\uparrow}|u|^{p^\uparrow},
\]
where $p^\downarrow=\displaystyle\frac{N+\alpha}{2N}p$ and $p^\uparrow=\displaystyle\frac{p_s^*}{p}p^\downarrow=\frac{N+\alpha}{N-sp}\frac{p}{2}$ are respectively lower and upper critical exponents in the sense of the Hardy-Littlewood-Sobolev inequality, and $p_s^*=\displaystyle\frac{pN}{N-ps}$ denotes the fractional Sobolev critical exponent. Although the notation with $\uparrow$ and $\downarrow$ is not common, we use this here for the convenience of distinguishing the three criticalities (upper criticality, lower criticality and Sobolev criticality) by three symbols $\uparrow$, $\downarrow$ and $\ast$.\par
Throughout this paper, for $q\in [1,\infty]$, $\|\cdot\|_q$ denotes the $L^q$ norm, $L^q_{+}= L^q_{+} (\mathbb{R}^N)$ denotes the set consisting of all positive $L^q$ functions, $B_r(x)$ denotes the open ball with radius $r$ centered at $x$ in $\mathbb{R}^N$, and $C$, $C'$, $C''$, $C_i$ and $C_i'$ are various positive constants.\par

The associated functional is
\[
I[u]=\frac{1}{p}[u]_{s,p}^p+\frac{1}{p}a\int_{\mathbb{R}^N}|u|^p dx-\frac{b}{2}\int_{\mathbb{R}^N}(K\ast F(u))F(u)dx-\frac{\varepsilon_g}{p_g} \int_{\mathbb{R}^N} |u|^{p_g} dx,
\]
where $[u]_{s,p}$ denotes the Gagliardo semi-norm of the homogeneous fractional Sobolev-Slobodeckij space $D^{s,p}(\mathbb{R}^N)$, which is defined by
\[
[u]_{s,p}^p\coloneqq \int_{\mathbb{R}^N} \int_{\mathbb{R}^N}\frac{|u(x)-u(y)|^p}{|x-y|^{N+ps}}dxdy,
\]
and we work in the inhomogeneous fractional Sobolev-Slobodeckij space $W^{s,p} (\mathbb{R}^N)$ equipped with the norm $\|\cdot\|=\|\cdot\|_{W^{s,p}}\coloneqq (\|\cdot\|_p^p+ [\cdot ]_{s,p}^p)^{1/p}$. A critical point of $I$ is called a weak solution of \eqref{cho}. \par
Our main theorem is as follows:
\begin{theorem}\label{main}
Assume $N\geq \max\{2ps+\alpha, p^2 s\}$, $0<s<1$ and $a,b>0$. Then, the equation \eqref{cho} has a nontrivial weak solution if one of the following conditions holds:
\begin{itemize}
\item $1<p<\infty$, $p<p_g< p_s^*$ and $\varepsilon_g>0$.
\item $p=2$, $p_g= p_s^*$ and $\varepsilon_g>0$ is sufficiently small.
\end{itemize}
\end{theorem}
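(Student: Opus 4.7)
The plan is to produce a nontrivial critical point of $I$ via the mountain pass lemma, pushing the mountain pass level below the first non-compactness threshold in the style of Brezis-Nirenberg, and then appealing to the announced generalized Lions-type theorem to prevent the Palais-Smale sequence from vanishing.

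First I would verify mountain pass geometry on $W^{s,p}(\mathbb{R}^N)$. The HLS inequality combined with the fractional Sobolev embedding $W^{s,p}\hookrightarrow L^q$ for $p\le q\le p_s^*$ bounds the Choquard term by $C(\|u\|^{2p^\downarrow}+\|u\|^{2p^\uparrow})$ and the local term by $C\|u\|^{p_g}$. Since $2p^\downarrow$, $2p^\uparrow$ and $p_g$ all strictly exceed $p$, this yields $I[u]\ge\alpha>0$ on some sphere $\|u\|=\rho$; for any $v\in C_c^\infty(\mathbb{R}^N)\setminus\{0\}$ the negative critical terms dominate as $t\to\infty$, so $I[tv]\to-\infty$. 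The mountain pass lemma then produces a Palais-Smale sequence $(u_n)$ at the level $c_{MP}>0$. Combining $I[u_n]=c_{MP}+o(1)$ with $\langle I'[u_n],u_n\rangle=o(\|u_n\|)$ in the usual weighted way, using a weight $\theta\in(p,\min\{2p^\downarrow,2p^\uparrow,p_g\})$, gives $\|u_n\|\le C$.

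Next, a standard Brezis-Lieb type splitting of the critical terms shows that the PS condition holds strictly below some threshold $c^*$ determined by the best constants associated with the critical exponents involved (the fractional Sobolev constant when $p_g=p_s^*$, and the best HLS constant for the upper critical exponent $p^\uparrow$). The decisive estimate is therefore $c_{MP}<c^*$. I would insert a rescaled (and, if needed, truncated) extremizer $U_\varepsilon$ associated with the dominant critical term into $I$ and expand $\max_{t\ge 0}I[tU_\varepsilon]$ as $\varepsilon\to 0$. The dimensional assumption $N\ge\max\{2ps+\alpha,\,p^2s\}$ is exactly the condition that makes the subcritical contributions (the mass term $a\|U_\varepsilon\|_p^p$ together with the lower critical Choquard piece, and $\|U_\varepsilon\|_{p_g}^{p_g}$ when $p_g<p_s^*$) dominate the correction coming from the critical piece and force the energy strictly below $c^*$. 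In the doubly critical case $p=2$ and $p_g=p_s^*$ two concurrent critical scalings must be balanced simultaneously, which is why $\varepsilon_g$ must then be taken sufficiently small.

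With $c_{MP}<c^*$ in hand, I invoke the generalized Lions-type theorem to rule out vanishing: if $\sup_{y\in\mathbb{R}^N}\int_{B_1(y)}|u_n|^p\,dx\to 0$, the generalization tailored to the HLS nonlinearity should force the Choquard integral to vanish (something the classical Lions lemma does not immediately deliver at the endpoint exponents $2p^\downarrow$ and $2p^\uparrow$), which together with the control on the Sobolev critical piece contradicts $c_{MP}>0$. Translating $u_n$ to a concentration center, we may therefore assume $u_n\rightharpoonup u\not\equiv 0$ in $W^{s,p}$, and passing to the limit in $\langle I'[u_n],\varphi\rangle\to 0$ for $\varphi\in C_c^\infty(\mathbb{R}^N)$ via local compactness identifies $u$ as a nontrivial critical point of $I$. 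The main obstacle will plainly be the level estimate $c_{MP}<c^*$: three distinct critical exponents ($p^\downarrow$, $p^\uparrow$, and possibly $p_s^*$) with their respective Talenti-type extremizers must be controlled simultaneously, and the dimension restriction together with the smallness of $\varepsilon_g$ in the fully critical case is precisely what makes this balance possible.
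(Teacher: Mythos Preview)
Your overall scheme matches the paper, but the two decisive steps are described in ways that would not work. First, the threshold $c^*$ you identify is incomplete: you list only the levels attached to $p^\uparrow$ and $p_s^*$, whereas the paper's $c^*$ is the \emph{minimum} of an upper-critical threshold and a lower-critical one, $\bigl(\tfrac{1}{p}-\tfrac{1}{2p^\downarrow}\bigr)(p^\downarrow/b)^{p/(2p^\downarrow-p)}(aS^\downarrow)^{2p^\downarrow/(2p^\downarrow-p)}$. Correspondingly, to push $c_{MP}$ below $c^*$ one must test with a concentrating profile \emph{and} with a spreading one, $u_\lambda^\downarrow(x)=\lambda^{-N/p}u^\downarrow(x/\lambda)$ with $\lambda\to\infty$; the hypothesis $N\ge 2ps+\alpha$ is used precisely in this spreading computation, while $N\ge p^2 s$ enters the concentrating one. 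Your plan inserts only concentrating extremizers and treats the lower-critical Choquard piece as a merely subcritical correction.

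More seriously, your Lions-type step has the wrong mechanism. The hoped-for implication ``$\sup_y\int_{B_1(y)}|u_n|^p\to 0 \Rightarrow \text{Choquard integral}\to 0$'' fails at both endpoints: one only has $\int(K\ast|u_n|^{p^\downarrow})|u_n|^{p^\downarrow}\le C\|u_n\|_p^{2p^\downarrow}$ and $\int(K\ast|u_n|^{p^\uparrow})|u_n|^{p^\uparrow}\le C\|u_n\|_{p_s^*}^{2p^\uparrow}$, and neither $\|u_n\|_p$ nor $\|u_n\|_{p_s^*}$ is forced to vanish by ball-vanishing at a fixed scale---this is exactly the exclusion of endpoints in the classical Lions lemma, not something a ``tailored'' version can repair. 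The paper proceeds in the opposite direction: from $0<c_{MP}<c^*$ it shows directly (Lemma~\ref{float}) that $\liminf\|u_n\|_p>0$ \emph{and} $\liminf\|u_n\|_{p_s^*}>0$, by arguing that if either endpoint norm vanishes then only one critical piece survives and the level is forced up to the corresponding threshold. The new Lions-type theorem (Proposition~\ref{CC}) then takes these two endpoint lower bounds as \emph{hypotheses} and, via a refined Sobolev inequality $\|u\|_{p_s^*}\le C[u]_{s,p}^{\theta}\|u\|_{\mathcal M_p^{p_s^*}}^{1-\theta}$ through Besov and Morrey norms, produces translations with a nontrivial weak limit. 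Incidentally, your claim that the Palais--Smale condition holds below $c^*$ is also false: the problem is translation-invariant on $\mathbb{R}^N$, so no positive level satisfies PS; compactness is recovered only after translating.
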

In recent years, great attention has been drawn to the study of Choquard-type equations and fractional versions of elliptic equations. The nonlinear Choquard equation 
\begin{equation}
-\Delta u+V(x)u=(I_\alpha \ast |u|^q)|u|^{q-2}u\quad\text{in }\mathbb{R}^N
\end{equation}
first appeared under $N=3$ and $q=2$ in the description of the quantum theory of polaron by Pekar \cite{Pekar} in 1954 and the modeling of an electron trapped in its own hole in Choquard's work in 1976, which was obtained as an approximation related to the Hartree-Fock theory concerning one-component plasma in \cite{Hartree-Fock}. When $V\equiv 1$, the ground state solutions exist if $2^\downarrow<q<2^\uparrow$ due to the mountain pass lemma or the method of the Nehari manifold, and their qualitative properties and decay asymptotics are studied in \cite{Moroz 2} while there are no nontrivial solutions if $q=2^\downarrow$ or if $q=2^\uparrow$ as a consequence of the Pohozaev identity. For more general $V$, Lions' concentration compactness method is helpful. Regarding the lower critical case $q=2^\downarrow$, Moroz and Schaftingen \cite{Moroz lower critical} showed a sufficient condition concerning $V$ for the existence of ground states. Regarding the upper critical case, Li and Tang \cite{Existence of a ground state solution for Choquard equation with the upper critical exponent} considered the Choquard-type equation
\[
-\Delta u+u=(I_\alpha \ast |u|^q)|u|^{q-2}u+g(u)\quad\text{in }\mathbb{R}^N
\]
involving a local nonlinearity $g(u)$ satisfying some subcritical growth conditions and obtained ground state solutions. For more information on the various results related to the non-fractional Choquard-type equations and their variants, see \cite{Moroz}. On the other hand, in the field of fractional quantum mechanics, the nonlinear fractional Schr\"{o}dinger equation was first proposed by Laskin \cite{Laskin} as a consequence of expanding the classical Feynman path integral to the L\'{e}vy-like quantum mechanical paths. The stationary states of the corresponding fractional Schr\"{o}dinger-Newton equations satisfy the fractional Choquard equations. d'Avenia et al. \cite{classical fractional Choquard 1, AttainTalenti} studied the existence and some properties of the weak solutions for the fractional subcritical Choquard equation 
\[
(-\Delta)^{s} u+V(x)u=(I_\alpha \ast |u|^q)|u|^{q-2}u\quad\text{in }\mathbb{R}^N.
\]
As for the Hardy-Littlewood-Sobolev doubly critical case, Seok \cite{doubly critical Seok} obtained a nontrivial solution to
\begin{equation}\label{non-fractional doubly critical Choquard}
-\Delta u+u=(K\ast F(u))F'(u)\quad \text{in }\mathbb{R}^N.
\end{equation}
This result was later extended by \cite{doubly critical new}, and Su et al. \cite{doubly critical fractional} considered the fractional version 
\[
(-\Delta)^s u+u=(K\ast F(u))F'(u)\quad \text{in }\mathbb{R}^N
\]
and its variant
\[
\varepsilon^{2s} (-\Delta)^s u+V(x)u=\varepsilon^{-\alpha}(K\ast F(u))F'(u)\quad \text{in }\mathbb{R}^N
\]
with a parameter $\varepsilon>0$ and analyzed the concentration behavior of its solutions. Liu et al. \cite{doubly critical infinitely many} obtained infinitely many solutions for \eqref{non-fractional doubly critical Choquard} by using the notion of the Krasnoselskii genus. These all deal with the double criticality in the sense that $F$ involves both upper and lower Hardy-Littlewood-Sobolev critical exponents $2^\uparrow$ and $2^\downarrow$. On the other hand, there is another meaning of ``doubly critical'' for Choquard equations. It is the combination of Sobolev criticality of the local nonlinearity $|u|^{p_s^*-2}u$ and either upper or lower Hardy-Littlewood-Sobolev criticality of the nonlocal nonlinearity $(K\ast F(u))F'(u)$. Regarding the doubly critical case in this meaning, Cai and Zhang \cite{local and nonlocal doubly critical Brezis-Nirenberg} considered the Brezis-Nirenberg type problem
\[
-\Delta u-\lambda u=\alpha |u|^{2^*-2}u+\beta (K\ast |u|^{2^\uparrow}) |u|^{2^\uparrow -2}u \quad \text{in }\Omega,
\]
where $\Omega$ is a bounded domain in $\mathbb{R}^N$, and Li et al. \cite{Ground State Solutions for a Class of Choquard Equations Involving Doubly Critical Exponents} obtained a ground state solution to the autonomous Choquard equation
\[
-\Delta u+ u=(K\ast |u|^{2^\downarrow}) |u|^{2^\downarrow -2}u+|u|^{2^*-2}u+g(u) \quad \text{in }\mathbb{R}^N
\]
using a minimax principle and the Pohozaev manifold method. However, there is no existence result for ground state solutions to the Choquard equations involving Sobolev criticality and both upper and lower Hardy-Littlewood-Sobolev criticality in the existing literature. In the present paper, motivated by the above works, we deal with such a ``triply critical'' case where two types of double criticality are fused. \par
We prove the existence of nontrivial solutions by the mountain pass lemma, or equivalently, using the method of Nehari manifold. Such solutions are hence ground state solutions. In order to eliminate the possibility of losing the compactness due to the translation invariance of the equation and the scaling property corresponding to the criticality, we focus on the energy levels corresponding to scaling limits and use a new Lions-type theorem. 

\section{Preliminaries}
First, we introduce three important best constants:
\[
S^\downarrow\coloneqq\inf_{u\in W^{s,p}\setminus\{0\}}\frac{\|u\|_{p}^p}{\left(\int_{\mathbb{R}^N}(K\ast |u|^{p^\downarrow}) |u|^{p^\downarrow}dx\right)^{p/(2\cdot p^\downarrow)}},
\]
\[
S^\uparrow=S_{H,L}\coloneqq\inf_{u\in D^{s,p}\setminus\{0\}}\frac{[u]_{s,p}^p}{\left(\int_{\mathbb{R}^N}(K\ast |u|^{p^\uparrow}) |u|^{p^\uparrow}dx\right)^{p/(2\cdot p^\uparrow)}},
\]
\[
S^*\coloneqq \inf_{u\in D^{s,p}\setminus\{0\}}\frac{[u]_{s,p}^p}{\|u\|_{p_s^*}^{p}}.
\]

For general $p\neq 2$, the explicit formula for the extremal functions for the $p$-fractional Sobolev inequality is not known yet, though it is conjectured that it is of the form
\[
U(x)=C(1+|x|^{\frac{p}{p-1}})^{s-\frac{N}{p}}
\]
up to translation and dilation. However, there is a result about the asymptotic behavior of $U$, as seen in \cite{Optimal decay of extremal functions for the fractional Sobolev inequality} and \cite{The Brezis-Nirenberg problem for the fractional p-Laplacian}. 
\begin{proposition}\label{asymptotic behavior}
There exists a radially symmetric decreasing positive minimizer $U$ for $S^*$. For such $U$, there exist constants $c_1,c_2,U_\infty>0$ such that for any $x\in\mathbb{R}^N$, we have
\[
\frac{c_1}{1+|x|^{\frac{N-ps}{p-1}}}\leq U(x)\leq \frac{c_2}{1+|x|^{\frac{N-ps}{p-1}}}
\]
and
\[
|x|^{\frac{N-ps}{p-1}} U(x)\to U_\infty \quad (|x|\to\infty).
\]
\end{proposition}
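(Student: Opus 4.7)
The plan is to split the proposition into three claims and attack them in order: (i) existence of a radially symmetric decreasing positive minimizer $U$, (ii) the two-sided pointwise envelope, and (iii) the precise asymptotic at infinity.

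For (i), I would take a minimizing sequence $(u_n)\subset D^{s,p}\setminus\{0\}$ for $S^*$. Using the scaling invariance $u\mapsto\lambda^{(N-ps)/p}u(\lambda(\cdot-x_0))$ of the Rayleigh quotient, I normalize $\|u_n\|_{p_s^*}=1$ and fix the concentration at the origin. Replacing each $u_n$ by its symmetric decreasing rearrangement $u_n^\ast$ preserves the $L^{p_s^*}$ norm and does not increase the Gagliardo semi-norm, by the fractional P\'olya--Szeg\H{o} inequality (Almgren--Lieb for $p=2$, Frank--Seiringer in general). A concentration-compactness / profile-decomposition argument in $D^{s,p}$ then yields a nontrivial weak limit $U$ attaining $S^*$ by lower semicontinuity; radial monotonicity and nonnegativity pass to the limit, and the strong maximum principle for $(-\Delta)_p^s$ upgrades $U\geq 0$ to $U>0$.

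For (ii), after rescaling I may assume $U$ weakly solves $(-\Delta)_p^s U=U^{p_s^*-1}$. Moser iteration adapted to the fractional $p$-Laplacian (Di Castro--Kuusi--Palatucci) gives $U\in L^\infty(\mathbb{R}^N)$, so $U\leq c_2$ on a neighbourhood of the origin. The decay exponent $\sigma\coloneqq(N-ps)/(p-1)$ is singled out because $|x|^{-\sigma}$ is the natural radial scale-invariant model for $(-\Delta)_p^s$ outside the origin. I would then construct radial super- and subsolutions of the form $C_\pm|x|^{-\sigma}$ on $\mathbb{R}^N\setminus B_{R_0}$, verify the comparison identities (using that $\sigma(p_s^*-1)=N+ps/(p-1)>N$, so the right-hand side is subordinate at infinity), and invoke the weak comparison principle for $(-\Delta)_p^s$; combined with the $L^\infty$ bound this gives the two-sided envelope.

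For (iii), I would rescale $U_R(x)\coloneqq R^{\sigma}U(Rx)$. The envelope makes $\{U_R\}_{R\geq 1}$ locally bounded on $\mathbb{R}^N\setminus\{0\}$, and interior H\"older regularity for $(-\Delta)_p^s$ (Iannizzotto--Mosconi--Squassina) makes it locally equi-H\"older, so some subsequence converges locally uniformly on $\mathbb{R}^N\setminus\{0\}$ to a limit $V$. The scaled equation reads $(-\Delta)_p^s U_R=R^{-ps/(p-1)}U_R^{p_s^*-1}$, so $V$ is radial, positive, decreasing, homogeneous of degree $-\sigma$, and satisfies $(-\Delta)_p^s V=0$ on $\mathbb{R}^N\setminus\{0\}$; a Liouville-type rigidity for such solutions forces $V(x)=c|x|^{-\sigma}$ for some $c\ge0$. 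The remaining point is that $c$ is independent of the subsequence, which yields a well-defined $U_\infty$; this requires combining the envelope with a refined PDE analysis. The main obstacle lies in steps (ii) and (iii) when $p\ne 2$: with no explicit formula for $U$ available, identifying the sharp exponent $\sigma$, the barrier constants, and the uniqueness of $U_\infty$ all rely on the full regularity theory for the nonlinear nonlocal operator $(-\Delta)_p^s$ together with a rigidity theorem on $\mathbb{R}^N\setminus\{0\}$, which is the technical heart of \cite{Optimal decay of extremal functions for the fractional Sobolev inequality} and \cite{The Brezis-Nirenberg problem for the fractional p-Laplacian}.
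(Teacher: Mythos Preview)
The paper does not actually prove Proposition~\ref{asymptotic behavior}; it is stated without proof and attributed to \cite{Optimal decay of extremal functions for the fractional Sobolev inequality} and \cite{The Brezis-Nirenberg problem for the fractional p-Laplacian}. So there is no in-paper argument to compare against.

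Your outline is a faithful high-level sketch of the strategy in \cite{Optimal decay of extremal functions for the fractional Sobolev inequality}: existence via rearrangement and concentration-compactness, the two-sided envelope via barriers and the weak comparison principle for $(-\Delta)_p^s$, and the sharp asymptotic via a blow-down $U_R(x)=R^{\sigma}U(Rx)$ combined with local H\"older estimates and a Liouville-type rigidity on $\mathbb{R}^N\setminus\{0\}$. One caveat in your step~(iii): you list ``homogeneous of degree $-\sigma$'' among the properties of the subsequential limit $V$, but this is not automatic---if $U_{R_k}\to V$, then $V(\lambda x)=\lambda^{-\sigma}\lim_k (\lambda R_k)^{\sigma}U(\lambda R_k x)$, which you cannot identify with $\lambda^{-\sigma}V(x)$ without already knowing that the \emph{full} family converges. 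Homogeneity is therefore the conclusion of the rigidity step, not an input to it. Likewise, showing that the constant $c$ is strictly positive (so that $U_\infty>0$) and independent of the subsequence requires more than a bare Liouville statement; in the cited reference this is handled through a finer analysis. Since you yourself flag these points as the technical heart of the cited papers, your sketch is accurate as an overview but not a self-contained proof.
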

On the other hand, the extremal functions for the Hardy-Littlewood-Sobolev inequality are already well-known. For example, see \cite{Lieb}. 
\begin{proposition}
For $p_1,p_2>1$ with $1/p_1+(N-\alpha)/N+1/p_2=2$, there exists a sharp constant $C(N,\alpha,p_1)>0$ such that for any $f_1\in L^{p_1}(\mathbb{R}^N)$, $f_2\in L^{p_2}(\mathbb{R}^N)$, we have
\[
\int_{\mathbb{R}^N}(K\ast f_1)f_2 dx\leq C(N,\alpha,p_1)\|f_1\|_{p_1}\|f_2\|_{p_2}.
\]
If $p_1=p_2(=\frac{2N}{N+\alpha})$, then the equality holds if and only if there exist constants $C_1,C_2,C_3$ with $C_3\neq 0$ and $x_0\in\mathbb{R}^N$ such that $f_1=C_1 f_2$ and
\[
f_2(x)= \frac{C_2}{(C_3+|x-x_0|^{2})^{\frac{N+\alpha}{2}}}.
\]
\end{proposition}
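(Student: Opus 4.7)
This is Lieb's form of the Hardy--Littlewood--Sobolev inequality together with the classification of extremals in the conformal (diagonal) case, and I would follow Lieb's original two-stage strategy.

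\emph{Stage 1: finiteness of the sharp constant.} By duality with $q=p_2/(p_2-1)$, the inequality is equivalent to the Riesz-potential bound $\|I_\alpha f\|_q\le C\|f\|_{p_1}$ with $I_\alpha f=K\ast f$, and the relation $1/p_1+(N-\alpha)/N+1/p_2=2$ translates into the scaling $1/p_1-1/q=\alpha/N$. Splitting the kernel at a radius $R>0$ yields the bound $|I_\alpha f(x)|\le CR^\alpha\,Mf(x)+CR^{\alpha-N/p_1}\|f\|_{p_1}$, where $M$ is the Hardy--Littlewood maximal operator, the inner piece being controlled by annular integration against $Mf(x)$ and the outer piece by H\"older's inequality. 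Optimizing $R$ pointwise produces Hedberg's inequality $|I_\alpha f(x)|\le C(Mf(x))^{p_1/q}\|f\|_{p_1}^{1-p_1/q}$, and integration using the $L^{p_1}$-boundedness of $M$ (which holds since $p_1>1$) gives $\|I_\alpha f\|_q\le C\|f\|_{p_1}$. Hence the best constant $C(N,\alpha,p_1)$ is finite.

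\emph{Stage 2: extremals at $p_1=p_2=\frac{2N}{N+\alpha}$.} I would maximize $\mathcal{I}(f,g)=\iint f(x)|x-y|^{-(N-\alpha)}g(y)\,dx\,dy$ under $\|f\|_{p_1}=\|g\|_{p_1}=1$. The Riesz rearrangement inequality gives $\mathcal{I}(f,g)\le\mathcal{I}(f^\ast,g^\ast)$, so any maximizing sequence may be taken radially symmetric and decreasing, which eliminates translational non-compactness. Dilation non-compactness I would handle by passing to the sphere via stereographic projection $\pi:S^N\to\mathbb{R}^N$: setting $F=J^{1/p_1}(f\circ\pi)$, where $J$ is the conformal Jacobian of $\pi^{-1}$, one has $\|F\|_{L^{p_1}(S^N)}=\|f\|_{L^{p_1}(\mathbb{R}^N)}$, and the conformal identity $|\pi(\xi)-\pi(\eta)|^2=c\,|\xi-\eta|^2\,J(\xi)^{2/N}J(\eta)^{2/N}$ transforms $\mathcal{I}$ into a constant multiple of $\iint_{S^N\times S^N}F(\xi)G(\eta)|\xi-\eta|^{-(N-\alpha)}\,d\sigma(\xi)\,d\sigma(\eta)$. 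The resulting problem on $S^N$ is invariant under the full conformal group, but a concentration-compactness argument modulo M\"obius transformations produces a maximizer $F_\ast$: if a radial maximizing sequence concentrates, a suitable M\"obius transformation spreads its mass so that a subsequence converges strongly in $L^{p_1}(S^N)$. A competing-symmetries argument (rotations of $S^N$ versus the inversion $x\mapsto x/|x|^2$ pulled back from $\mathbb{R}^N$) then forces $F_\ast$ to be constant on $S^N$ and $F_\ast=G_\ast$; transporting a constant on $S^N$ back through $\pi$ yields exactly the family $f_2(x)=C_2(C_3+|x-x_0|^2)^{-(N+\alpha)/2}$ with $f_1=C_1 f_2$.

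\emph{Main obstacle.} Stage 1 is standard harmonic analysis; the substantive content is Stage 2. The variational problem carries a noncompact invariance group (translations, dilations, and after conformal transport the full M\"obius group of $S^N$) that defeats direct extraction of strongly convergent maximizing sequences in $L^{p_1}(\mathbb{R}^N)$. The concentration-compactness reduction on $S^N$ and the rigidity argument pinning down extremals as constants on the sphere are the technically delicate steps. In the body of the paper I would therefore simply invoke the result from \cite{Lieb} rather than reproduce these steps.
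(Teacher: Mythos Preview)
Your sketch is a faithful outline of Lieb's argument, and your closing remark is exactly what the paper does: the proposition is stated as a known result and simply referred to \cite{Lieb}, with no proof given in the text. So there is nothing to compare against; your proposal already matches the paper's treatment (cite rather than prove), and the optional Stage~1/Stage~2 outline you provide is correct and more than the paper offers.
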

Let us define $C(N,\alpha, \frac{2N}{N+\alpha})=C(N,\alpha)$. 
The case $p=2$ is special in the sense that the best constants $S^*$ and $C(N,\alpha)$ are attained at the same time by the same functions, and thus $S^*= C(N,\alpha)^{\frac{p}{2\cdot p^\uparrow}} S^\uparrow$ holds. In fact, if $p=2$, they are, up to a scaling and a translation, of the form
\[
(1+|x|^2)^{-(N-ps)}=((1+|x|^2)^{-\frac{N+\alpha}{2}})^{\frac{1}{p^\downarrow}}.
\]
On the other hand, if $p\neq 2$, the asymptotic behaviors of $U(x)$ and $((1+|x|^2)^{-\frac{N+\alpha}{2}})^{\frac{1}{p^\downarrow}}$ are different from each other. \par
In order to recover the compactness of $(PS)_c$ sequences, we generalize a Lions-type theorem. The classical Lions-type theorem is as follows:
\begin{proposition}
Let $\{u_n\}\subset H^1(\mathbb{R}^N)$ be a bounded sequence. Assume that there exists $r\in (2,2^*)$ such that
\[
\lim_{n\to\infty} \|u_n\|_{r}>0.
\] 
Then, up to a subsequence, there exists $\{y_n\}\subset \mathbb{R}^N$ such that $\{u_n(\cdot +y_n)\}$ converges to some $u\neq 0$ weakly in $H^1$, almost everywhere and in $L^r_\mathrm{loc}(\mathbb{R}^N)$. 
\end{proposition}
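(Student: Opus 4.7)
The plan is to prove this by applying the classical Lions concentration lemma (in its vanishing form) and then exploiting local compactness of the Sobolev embedding. The starting point is the well-known vanishing lemma: if $\{u_n\}\subset H^1(\mathbb{R}^N)$ is bounded and satisfies
\[
\sup_{y\in \mathbb{R}^N} \int_{B_R(y)} |u_n|^2\,dx \to 0 \quad \text{for some } R>0,
\]
then $u_n\to 0$ in $L^r(\mathbb{R}^N)$ for every $r\in (2,2^*)$. This is typically established by covering $\mathbb{R}^N$ with balls $B_R(y_k)$ of bounded overlap, applying the Sobolev embedding $H^1(B_R(y_k))\hookrightarrow L^{2^*}(B_R(y_k))$ combined with interpolation, and summing to control $\|u_n\|_r$.

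The first step is to contrapose this lemma: since by hypothesis $\limsup_n \|u_n\|_r>0$, for every $R>0$ the vanishing condition fails, so one can fix some $R>0$ and find $\delta>0$ along with a sequence $\{y_n\}\subset\mathbb{R}^N$ such that, up to a subsequence,
\[
\int_{B_R(y_n)} |u_n|^2\, dx \geq \delta > 0.
\]
Set $v_n(\cdot)\coloneqq u_n(\cdot + y_n)$. By translation invariance of the $H^1$ norm, $\{v_n\}$ is still bounded in $H^1(\mathbb{R}^N)$, and the concentration mass is now captured on the fixed ball $B_R(0)$, i.e. $\int_{B_R(0)}|v_n|^2\,dx \geq \delta$.

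The second step is to extract the weak limit. By reflexivity, pass to a subsequence so that $v_n\rightharpoonup v$ in $H^1(\mathbb{R}^N)$. By the Rellich--Kondrachov compactness theorem on bounded domains, $v_n\to v$ strongly in $L^q(B_{R'}(0))$ for every $R'>0$ and every $q\in [1,2^*)$; in particular $v_n\to v$ in $L^r_{\mathrm{loc}}(\mathbb{R}^N)$, and a further diagonal subsequence gives pointwise almost everywhere convergence. Strong $L^2$ convergence on $B_R(0)$ together with the lower bound $\int_{B_R(0)}|v_n|^2\,dx\geq \delta$ forces $\int_{B_R(0)}|v|^2\,dx\geq \delta>0$, so $v\not\equiv 0$.

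The only genuinely non-trivial ingredient is the vanishing lemma, which is the main technical obstacle; once it is in hand, the rest is a routine translation-and-compactness argument. For the generalization used later in the paper one would have to adapt the covering/interpolation step to the fractional $p$-setting (replacing $H^1$ by $W^{s,p}$ and $2^*$ by $p_s^*$), and the ball $B_R(y_n)$ may need to be rescaled to respect the critical scaling of the Choquard terms; but for the classical statement recorded here the above three-step scheme suffices.
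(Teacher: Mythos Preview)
Your argument is correct and is exactly the standard proof of Lions' vanishing lemma plus translation and Rellich--Kondrachov. However, the paper does not actually prove this proposition: it is recorded there only as the ``classical Lions-type theorem'' and is stated without proof, serving merely as a reference point against which the paper's own Proposition~\ref{CC} is contrasted.

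It is worth noting that for the generalization the paper \emph{does} prove (Proposition~\ref{CC}, the $W^{s,p}$ version with the endpoint assumptions $\|u_n\|_p,\|u_n\|_{p_s^*}\not\to 0$), the route is genuinely different from the covering-plus-interpolation scheme you outline. Instead of the vanishing lemma, the paper uses a refined Sobolev inequality of G\'erard--Meyer--Oru type,
\[
\|f\|_{p_s^*}\le C\,\|f\|_{\dot W^{s,p}}^{p/p_s^*}\,\|f\|_{\dot B^{-N/p_s^*}_{\infty,\infty}}^{1-p/p_s^*},
\]
together with the embedding $\mathcal{M}_p^{p_s^*}\hookrightarrow \dot B^{-N/p_s^*}_{\infty,\infty}$, to locate a ball $B_{\rho_n}(y_n)$ carrying a definite amount of $L^p$ mass at the Morrey scale. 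The assumption $\|u_n\|_p\not\to 0$ is then used (via H\"older) to rule out $\rho_n\to 0$. Your classical approach would not survive at the endpoints $r=p$ or $r=p_s^*$, which is precisely why the paper needs this sharper harmonic-analysis machinery; but for the classical $H^1$ statement with $r\in(2,2^*)$ that you were asked to prove, your three-step scheme is entirely adequate.
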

On the other hand, our new Lions-type theorem is as follows:
\begin{proposition}\label{CC}
Let $\{u_n\}\subset W^{s,p}(\mathbb{R}^N)$ be a bounded sequence. Assume
\[
\lim_{n\to\infty} \|u_n\|_{p}>0,\quad \lim_{n\to\infty} \|u_n\|_{p_s^*}>0.
\]
Then, up to a subsequence, there exists $\{y_n\}\subset \mathbb{R}^N$ such that $\{u_n(\cdot +y_n)\}$ converges weakly to some $u\neq 0$. 
\end{proposition}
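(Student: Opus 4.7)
The plan is to argue by contradiction. Suppose that, along every subsequence and for every choice of translations $\{y_n\}\subset\mathbb{R}^N$, the sequence $u_n(\cdot+y_n)$ converges weakly in $W^{s,p}$ to zero; I will derive a contradiction with the two endpoint lower bounds.

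First, this hypothesis forces the local $L^p$-mass to vanish uniformly:
\[
\sup_{y\in\mathbb{R}^N}\int_{B(y,R)}|u_n|^p\,dx\longrightarrow 0\qquad\text{for every fixed }R>0.
\]
Indeed, if the supremum stayed bounded below along a subsequence, one could choose $y_n$ almost realizing it and use the compact embedding $W^{s,p}(B(0,R))\hookrightarrow L^p(B(0,R))$ applied to $u_n(\cdot+y_n)$ to extract a nonzero weak $W^{s,p}$-limit. The standard fractional Lions vanishing lemma then yields $u_n\to 0$ strongly in $L^q(\mathbb{R}^N)$ for every $q\in(p,p_s^*)$.

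To exploit the opposite roles of the two endpoint norms, I would next fix an intermediate $q\in(p,p_s^*)$ and carry out a level-set decomposition $u_n=T_M u_n+(u_n-T_M u_n)$, where $T_M$ denotes the symmetric truncation at height $M>0$. Using the pointwise identity $|u_n|=|T_M u_n|+|u_n-T_M u_n|$, the estimates
\[
\|T_M u_n\|_{p_s^*}^{p_s^*}\le M^{p_s^*-q}\|u_n\|_q^q\to 0,\qquad \|u_n-T_M u_n\|_p^p\le M^{p-q}\|u_n\|_q^q\to 0
\]
show that, for any fixed $M$, the $L^{p_s^*}$-mass is asymptotically carried by the high-amplitude piece $u_n-T_M u_n$ and the $L^p$-mass by the low-amplitude piece $T_M u_n$; moreover Chebyshev's inequality gives $|\{|u_n|>M\}|\to 0$, so the high-amplitude piece lives on a vanishing-measure set.

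The principal technical hurdle is then to rule out this bifurcated coexistence of a broad, low-amplitude $L^p$-mass and a thin-support, high-amplitude $L^{p_s^*}$-mass using only the $W^{s,p}$-bound and the local $L^p$-vanishing established above. A natural strategy is to let $M=M_n\to\infty$ at a suitable rate, apply the fractional Sobolev inequality to the high-amplitude piece, $[u_n-T_M u_n]_{s,p}^p\ge S^*\|u_n-T_M u_n\|_{p_s^*}^p$, and use the Lipschitz bound $[T_M u_n]_{s,p}\le[u_n]_{s,p}$ for the low-amplitude piece; combined with a concentration-compactness dichotomy applied to the measures $|u_n|^p\,dx$ and $|u_n|^{p_s^*}\,dx$ simultaneously, this should either produce, after translation, a nonzero weak $W^{s,p}$-limit (contradicting the working hypothesis) or force the Gagliardo seminorm to blow up. The main obstacle I anticipate is controlling the non-local cross-term in $[u_n]_{s,p}^p$ between the low- and high-amplitude regions, which admits no clean superadditive splitting, together with a careful calibration of the threshold $M_n$ that harmonizes the two opposing scales.
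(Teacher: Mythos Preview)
Your argument stalls exactly where you say it does, and the obstacle is not merely technical. The bifurcated configuration you describe---a spreading low-amplitude piece carrying the $L^p$-mass together with a concentrating high-amplitude piece carrying the $L^{p_s^*}$-mass---is fully compatible with a uniform $W^{s,p}$-bound and with vanishing in every intermediate $L^q$. Applying the fractional Sobolev inequality to $u_n-T_M u_n$ only yields $[u_n-T_M u_n]_{s,p}\ge c>0$, which does not conflict with the global bound; the Lipschitz estimate $[T_M u_n]_{s,p}\le [u_n]_{s,p}$ points the wrong way to create tension; and running concentration-compactness on the measures $|u_n|^p\,dx$ and $|u_n|^{p_s^*}\,dx$ separately yields nothing new, since one can vanish while the other concentrates without interference. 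No calibration of $M_n\to\infty$ repairs this, because the two pieces live at genuinely independent scales. Classical Lions vanishing plus truncation simply does not contain enough information to tie those scales together.

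The paper's proof takes an entirely different route and supplies precisely the missing scale-linking ingredient: a refined Sobolev inequality of G\'erard--Meyer--Oru type,
\[
\|f\|_{p_s^*}\le C\,[f]_{s,p}^{\,p/p_s^*}\,\|f\|_{\mathcal M_p^{p_s^*}}^{\,1-p/p_s^*},
\]
obtained from the Besov-space estimate $\|f\|_{p_s^*}\lesssim \|f\|_{\dot B^{s}_{p,p}}^{\,p/p_s^*}\|f\|_{\dot B^{-N/p_s^*}_{\infty,\infty}}^{\,1-p/p_s^*}$ combined with the embedding $\mathcal M_p^{p_s^*}\hookrightarrow \dot B^{-N/p_s^*}_{\infty,\infty}$. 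Since $\|u_n\|_{p_s^*}$ is bounded below and $[u_n]_{s,p}$ bounded above, the Morrey norm $\sup_{R,y}R^{-s}\|u_n\|_{L^p(B_R(y))}$ must stay bounded below; one then selects balls $B_{\rho_n}(y_n)$ nearly achieving this supremum, bounds $\rho_n$ from above via $\|u_n\|_p\le C$, argues that $\rho_n\not\to 0$, and concludes by the compact embedding $W^{s,p}\hookrightarrow L^p_{\mathrm{loc}}$. The two endpoint hypotheses thus act as upper and lower constraints on a single scale $\rho_n$---a mechanism your truncation scheme lacks. If you want to salvage a version of your approach, the crucial missing input is an inequality coupling the $L^{p_s^*}$-norm to a scale-localized quantity such as a Morrey or negative-regularity Besov norm; without it the contradiction cannot be closed.
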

Compared to the classical Lions-type theorem, the assumption of Proposition \ref{CC} is weak because $\|u_n\|_p\to 0$ or $\|u_n\|_{p_s^*}\to 0$ implies $\|u_n\|_r\to 0$ for $r\in (p,p_s^*)$ due to the interpolation inequality. \par
In order to show Proposition \ref{CC}, we introduce the homogenous Sobolev-Slobodeckij space $\dot{W}^{s,p}$, the Riesz potential space (the homogenous version of the Bessel potential space) $\dot{H}^{s,p}$, the Morrey space $\mathcal{M}_q^r$, the homogeneous Besov space $\dot{B}_{r,q}^{s}$ and the homogeneous Triebel-Lizorkin space $\dot{F}_{r,q}^{s}$. Each space is equipped with the following (semi-)norm and defined as the space consisting of every tempered distribution (possibly modulo polynomials) such that the (semi-)norm is finite. 
\begin{gather*}
\mathcal{D}^{s,p}=\dot{W}^{s,p}= \dot{W}^{s,p}(\mathbb{R}^N)\coloneqq\{f\in L^{p_s^*} (\mathbb{R}^N)\mid [f]_{s,p}<\infty\} \quad (ps<N);\\
\|f\|_{\dot{H}^{s,p}}\coloneqq\|\mathcal{F}^{-1}(|\xi|^s\mathcal{F}f)\|_p \quad (s\in\mathbb{R},p\in (1,\infty)),\\ 
\dot{H}^{s,p}= \dot{H}^{s,p}(\mathbb{R}^N)\coloneqq\{f\in\mathcal{S}_\infty '(\mathbb{R}^N)\mid \|f\|_{\dot{H}^{s,p}}<\infty\} \quad (s\in\mathbb{R},p\in (1,\infty));\\
\|f\|_{\mathcal{M}_q^r}\coloneqq\sup_{R>0,x\in\mathbb{R}^N}R^{\frac{N}{r}-\frac{N}{q}}\|f\|_{L^q(B_R(x))} \quad(1\leq q\leq r<\infty),\\
\mathcal{M}_q^r =\mathcal{M}_q^r(\mathbb{R}^N)\coloneqq\{f\in L^1_\mathrm{loc} (\mathbb{R}^N)\mid \|f\|_{\mathcal{M}_q^r}<\infty\}\quad(1\leq q\leq r<\infty);\\
\|f\|_{\dot{B}^{s}_{r,q}}\coloneqq\|\{\|2^{js}\varphi_j(D)f\|_{L^p(\mathbb{R}^N)}\}_{j\in\mathbb{Z}}\|_{\ell^q(\mathbb{Z})} \quad(s\in\mathbb{R}, r,q\in(0,\infty]),\\
\dot{B}^{s}_{r,q}= \dot{B}^{s}_{r,q}(\mathbb{R}^N)\coloneqq\{f\in\mathcal{S}_\infty'(\mathbb{R}^N)\mid \|f\|_{\dot{B}^{s}_{r,q}}<\infty\}\quad(s\in\mathbb{R}, r,q\in(0,\infty]);\\
\|f\|_{\dot{F}^{s}_{r,q}}\coloneqq\|\|\{2^{js}\varphi_j(D)f\}_{j\in\mathbb{Z}}\|_{\ell^q(\mathbb{Z})} \|_{L^p(\mathbb{R}^N)} \quad(s\in\mathbb{R}, r\in (0,\infty),q\in(0,\infty]),\\
\dot{F}^{s}_{r,q}= \dot{F}^{s}_{r,q}(\mathbb{R}^N)\coloneqq\{f\in\mathcal{S}_\infty'(\mathbb{R}^N)\mid \|f\|_{\dot{F}^{s}_{r,q}}<\infty\}\quad(s\in\mathbb{R}, r\in (0,\infty),q\in(0,\infty]),
\end{gather*}
where $\mathcal{F}:\mathcal{S}'(\mathbb{R}^N)\to \mathcal{S}'(\mathbb{R}^N)$ denotes the Fourier transform, $\mathcal{S}'(\mathbb{R}^N)$ [resp. $\mathcal{S}'_\infty(\mathbb{R}^N)$] denotes the space of all tempered distributions [resp. all tempered distributions modulo polynomials] and $\{\varphi_j(D)f\}_{j\in\mathbb{Z}}$ is the Littlewood-Paley decomposition of $f$, that is, $\varphi_j(D)f=\mathcal{F}^{-1}(\varphi_j\mathcal{F}f)$, $\varphi_j(\xi)=\psi(2^{-j}\xi)-\psi(2^{-j+1}\xi)$ ($j\in\mathbb{Z}$) for some real-valued radial smooth function $\psi$ such that $\operatorname{supp}\psi\subset B_2(0)$ and $\psi=1$ on $B_1(0)$. The space $\mathcal{S}_\infty '(\mathbb{R}^N)$ can be identified with the space of every tempered distribution $f$ such that $\sum_{j=-n}^n \varphi_j(D)f$ converges to $f$ in $\mathcal{S}'(\mathbb{R}^N)$ as $n\to\infty$. By the H\"{o}lder's inequality for $\ell^q$, the embedding $\dot{F}^{s}_{r,q_1} \hookrightarrow \dot{F}^{s}_{r,q_2}$ holds for $1\leq q_1\leq q_2$. Moreover, $\dot{F}^s_{p,p}= \dot{B}^s_{p,p}= \dot{W}^{s,p}$ and $\dot{F}^s_{p,2}= \dot{H}^{s,p}$ holds under the standard identification. For details, see \cite{Triebel}. In addition, we use a refined Sobolev inequality with the Besov norm in \cite{Besov} and Lemma 3.4 in \cite{Morrey}. The well-known G\'{e}rard-Meyer-Oru's inequality implies
\begin{equation}\label{GMO}
\begin{split}
\|f\|_{p_s^*}&\leq C\|(-\Delta)^{s/2} f\|_{p}^{\theta}\|f\|_{\dot{B}_{\infty,\infty}^{-N/p_s^*}}^{1-\theta} \\
&= C'\|f\|_{\dot{H}^{s,p}}^{\theta}\|f\|_{\dot{B}_{\infty,\infty}^{-N/p_s^*}}^{1-\theta}
\end{split}
\end{equation}
for $\theta\in [\frac{p}{p_s^*},1)$. However, since we now work in $W^{s,p}$, this inequality is not applicable unless $p\leq 2$. If $p\leq 2$, then by the embedding $\dot{W}^{s,p}=\dot{F}^s_{p,p}\hookrightarrow \dot{F}^s_{p,2}= \dot{H}^{s,p}$, the inequality \eqref{GMO} is applicable to $f\in W^{s,p}(\mathbb{R}^N)$. On the other hand, if $p>2$, since the direction of the embedding is reversed, we need a stronger inequality. Fortunately, the same paper \cite{Besov} also gives a proof of an alternative inequality with the $\dot{B}^{s}_{p,p}$ norm, which we will adopt. \par
Proposition \ref{CC} can also be considered as a $p$-fractional generalization of Theorem 1.3 in \cite{doubly critical fractional}. However, let us note that a natural extension of the counterpart in \cite{doubly critical fractional} will give a statement for bounded sequences in $H^{s,p}$. Since we have $H^{s,p}\neq W^{s,p}$ in general when $p\neq 2,s\notin\mathbb{N}$, Proposition \ref{CC} is not merely a simple extension of Theorem 1.3 in \cite{doubly critical fractional}. \par
Now, let us give a proof for Proposition \ref{CC}.
\begin{proof}
As in \cite{Besov}, we have
\begin{align*}
\|f\|_{p_s^*}&\leq C\|f\|_{\dot{B}^{s}_{p,p}}^{\frac{p}{p_s^*}}\|f\|_{\dot{B}_{\infty,\infty}^{-N/p_s^*}}^{1-\frac{p}{p_s^*}} \\
&= C\|f\|_{\dot{F}^{s}_{p,p}}^{\frac{p}{p_s^*}}\|f\|_{\dot{B}_{\infty,\infty}^{-N/p_s^*}}^{1-\frac{p}{p_s^*}} \\
&= C'\|f\|_{\dot{W}^{s,p}}^{\frac{p}{p_s^*}}\|f\|_{\dot{B}_{\infty,\infty}^{-N/p_s^*}}^{1-\frac{p}{p_s^*}}.
\end{align*}
By Lemma 3.4 in \cite{Morrey}, the following embedding holds:
\[
\mathcal{M}_p^{p_s^*}\hookrightarrow \mathcal{M}_1^{p_s^*} \hookrightarrow \dot{B}_{\infty,\infty}^{-N/p_s^*}.
\]
Combining these, we obtain
\[
\|f\|_{p_s^*}\leq C\|f\|_{\dot{W}^{s,p}}^{\theta}\|f\|_{\mathcal{M}_p^{p_s^*}}^{1-\theta} \leq C\|f\|_{W^{s,p}}^{\theta}\|f\|_{\mathcal{M}_p^{p_s^*}}^{1-\theta}.
\] 
Apply this inequality for $f=u_n$. Then, by the definition of the Morrey norm, up to a subsequence, for any $n\in\mathbb{N}$, there exist $\rho_n>0$ and $y_n\in\mathbb{R}^N$ such that
\begin{equation}\label{morrey}
0<C<\rho_n^{-s}\|u_n\|_{L^p (B_{\rho_n}(y_n))}\leq C'\rho_n^{-s}.
\end{equation}
This implies that $\{\rho_n\}$ is bounded. Therefore, passing to a subsequence if necessary, we may assume $\rho_n\to \rho$ ($n\to\infty$). Define $v_n\coloneqq u_n(\cdot +y_n)$. Since $\{v_n\}$ is also bounded in $W^{s,p}$, up to a subsequence, we may assume 
\begin{align*}
v_n\rightharpoonup v \quad\text{weakly in $W^{s,p}(\mathbb{R}^N)$},
v_n\to v\quad\text{in $L^p_\mathrm{loc}(\mathbb{R}^N)$}.
\end{align*}
If $\rho=0$, then since $\{v_n\}$ is bounded in $L^{p_s^*}(\mathbb{R}^N)$, 
\[
\|v_n\|_{L^p(B_{\rho_n}(0))}\leq\|1\|_{L^{N/s}(B_{\rho_n}(0))} \|v_n\|_{L^{p_s^*}(B_{\rho_n}(0))} \leq C\|1\|_{L^{N/s}(B_{\rho_n}(0))}\to 0,
\]
which contradicts \eqref{morrey}. Therefore, $\rho>0$. Taking a limit in \eqref{morrey}, we obtain $v\neq 0$.
\end{proof}
Next, we prepare a lemma assuring the boundedness of $(PS)_c$ sequences. 
\begin{lemma}\label{bounded}
If $\{u_n\}$ is a $(PS)_c$ sequence for $I$, then $\{u_n\}$ is bounded.
\end{lemma}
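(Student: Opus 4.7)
My plan is to run the standard Ambrosetti--Rabinowitz--type argument: pick a constant $\theta>p$ and estimate the combination $I[u_n]-\frac{1}{\theta}\langle I'[u_n],u_n\rangle$. On one hand, by the $(PS)_c$ assumption, $I[u_n]=c+o(1)$ and $|\langle I'[u_n],u_n\rangle|\le \|I'[u_n]\|_{(W^{s,p})^*}\|u_n\|=o(\|u_n\|)$, so this quantity equals $c+o(1)+o(\|u_n\|)$. On the other hand, a direct computation should show it dominates a positive multiple of $\|u_n\|_{W^{s,p}}^p$, which forces boundedness since $p>1$.

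The game is to choose $\theta$ so that every remaining ``nonlinear'' piece contributes with the favourable sign. After a routine expansion,
\[
I[u_n]-\tfrac{1}{\theta}\langle I'[u_n],u_n\rangle=\bigl(\tfrac{1}{p}-\tfrac{1}{\theta}\bigr)\bigl([u_n]_{s,p}^p+a\|u_n\|_p^p\bigr)+b\,R(u_n)+\varepsilon_g\bigl(\tfrac{1}{\theta}-\tfrac{1}{p_g}\bigr)\|u_n\|_{p_g}^{p_g},
\]
where $R(u)=\frac{1}{\theta}\int (K\ast F(u))F'(u)u\,dx-\frac{1}{2}\int (K\ast F(u))F(u)\,dx$. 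The first coefficient is positive exactly when $\theta>p$, and the $p_g$-coefficient is nonnegative exactly when $\theta\le p_g$. For the nonlocal remainder, inserting $F(u)=\frac{|u|^{p^\downarrow}}{p^\downarrow}+\frac{|u|^{p^\uparrow}}{p^\uparrow}$, $F'(u)u=|u|^{p^\downarrow}+|u|^{p^\uparrow}$ and writing $A=\int(K\ast|u|^{p^\downarrow})|u|^{p^\downarrow}dx$, $B=\int(K\ast|u|^{p^\downarrow})|u|^{p^\uparrow}dx$, $C=\int(K\ast|u|^{p^\uparrow})|u|^{p^\uparrow}dx$, I expect
\[
R(u)=\tfrac{1}{p^\downarrow}\bigl(\tfrac{1}{\theta}-\tfrac{1}{2p^\downarrow}\bigr)A+\tfrac{1}{p^\downarrow p^\uparrow}\bigl(\tfrac{p^\downarrow+p^\uparrow}{\theta}-1\bigr)B+\tfrac{1}{p^\uparrow}\bigl(\tfrac{1}{\theta}-\tfrac{1}{2p^\uparrow}\bigr)C,
\]
which, since $A,B,C\ge 0$ and $p^\downarrow<p^\uparrow$, is nonnegative whenever $\theta\le 2p^\downarrow$ (the binding constraint among $2p^\downarrow$, $p^\downarrow+p^\uparrow$, $2p^\uparrow$).

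All three conditions are met by the concrete choice $\theta:=\min\{p_g,2p^\downarrow\}$: indeed $\theta>p$ because $p_g>p$ by hypothesis and $2p^\downarrow=\frac{N+\alpha}{N}p>p$ since $\alpha>0$, while $\theta\le p_g$ and $\theta\le 2p^\downarrow$ are built in. With this $\theta$, both the nonlocal and $p_g$-contributions are nonnegative, so
\[
\bigl(\tfrac{1}{p}-\tfrac{1}{\theta}\bigr)\min(1,a)\|u_n\|_{W^{s,p}}^p\le \bigl(\tfrac{1}{p}-\tfrac{1}{\theta}\bigr)\bigl([u_n]_{s,p}^p+a\|u_n\|_p^p\bigr)\le c+o(1)+o(\|u_n\|),
\]
which, because $p>1$, yields $\|u_n\|_{W^{s,p}}=O(1)$. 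I do not anticipate a real obstacle; the only point that needs some care is the algebraic rearrangement giving the formula for $R(u)$, but this is a straightforward expansion of the Choquard integrals using the symmetry $\int (K\ast f)g\,dx=\int (K\ast g)f\,dx$.
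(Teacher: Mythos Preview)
Your proposal is correct and coincides with the paper's argument: the paper takes the combination $I[u_n]-\max\{\tfrac{1}{2p^\downarrow},\tfrac{1}{p_g}\}\,I'[u_n]u_n$, which is exactly your choice since $\tfrac{1}{\theta}=\tfrac{1}{\min\{p_g,2p^\downarrow\}}=\max\{\tfrac{1}{2p^\downarrow},\tfrac{1}{p_g}\}$. Your write-up is in fact more explicit than the paper's, which simply asserts the inequality without expanding the nonlocal remainder $R(u)$; your verification that each of the $A$, $B$, $C$ coefficients is nonnegative for $\theta\le 2p^\downarrow$ fills that in.
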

\begin{proof}
\begin{equation}\label{I'[u_n]u_n}
\begin{split}
o(1)\cdot\|u_n\|&=I'[u_n]u_n \\
&= [u_n]_{s,p}^p+a\|u_n\|_p^p \\
&\phantom{=}-b\int_{\mathbb{R}^N}(K\ast F(u_n))f(u_n)u_n dx-\varepsilon_g \|u_n\|_{p_g}^{p_g},
\end{split}
\end{equation}
\begin{equation}
\begin{split}
c+o(1)&=I[u_n]\\
&= \frac{1}{p}([u_n]_{s,p}^p+a\|u_n\|_p^p)\\
&\phantom{=}-\frac{b}{2}\int_{\mathbb{R}^N}(K\ast F(u_n))F(u_n)dx-\frac{\varepsilon_g}{p_g}\|u_n\|_{p_g}^{p_g}.
\end{split}
\end{equation}
Therefore, we have
\begin{align*}
c+o(1)+o(1)\cdot \|u_n\|&\geq I[u_n]-\max\left\{\frac{1}{2\cdot p^\downarrow},\frac{1}{p_g}\right\} I'[u_n]u_n \\
&\geq \left(\frac{1}{p}-\max\left\{\frac{1}{2\cdot p^\downarrow},\frac{1}{p_g}\right\}\right) ([u_n]_{s,p}^p+a\|u_n\|_p^p) \\
&\geq C\|u_n\|^p.
\end{align*}
This implies $\{\|u_n\|\}$ is bounded.
\end{proof}

\section{Minimax values}
Let us consider $\tilde{a}\in L^\infty (\mathbb{R}^N;[0,\infty))$ with $0<\tilde{a}(x)\nearrow a$ ($|x|\to\infty$), where $\nearrow$ represents the convergence from below, and the functional
\begin{align*}
I_{\tilde{a}}[u] &=\frac{1}{p}[u]_{s,p}^p+\frac{1}{p}\int_{\mathbb{R}^N}\tilde{a}(x)|u|^p dx-\frac{b}{2}\int_{\mathbb{R}^N}(K\ast F(u))F(u)dx-\frac{\varepsilon_g}{p_g}\int_{\mathbb{R}^N}|u|^{p_g} dx.
\end{align*}
It is easy to check that $I_{\tilde{a}}$ has a mountain pass geometry. That is, the following holds:
\begin{lemma}\label{mountain pass geometry}
There exist positive constants $d$ and $\rho$ such that $I_{\tilde{a}}[u]\geq d$ if $\|u\|=\rho$, and there exists $v\in W^{s,p}(\mathbb{R}^N)$ with $\|v\|>\rho$ such that $I_{\tilde{a}}[v]\leq 0= I_{\tilde{a}} [0]$. 
\end{lemma}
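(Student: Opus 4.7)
The plan is to verify mountain pass geometry by the standard two-step argument: establish a positive lower bound on a small sphere, and produce a ray along which the functional diverges to $-\infty$.

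For the first step, I would exploit the positivity of $\tilde{a}$ to fix a constant $c_0>0$ with $\tilde{a}\geq c_0$ a.e.\ on $\mathbb{R}^N$ (this follows from $\tilde{a}>0$ everywhere and $\tilde{a}(x)\to a$ at infinity), so that the quadratic-like part controls $\delta\|u\|^p$ for some $\delta>0$. Next, the Hardy-Littlewood-Sobolev inequality applied to $F(u)$, together with the identities $p^\downarrow\cdot\frac{2N}{N+\alpha}=p$ and $p^\uparrow\cdot\frac{2N}{N+\alpha}=p_s^*$, yields
\[
\int_{\mathbb{R}^N}(K\ast F(u))F(u)\,dx\leq C\bigl(\|u\|_p^{2p^\downarrow}+\|u\|_p^{p^\downarrow}\|u\|_{p_s^*}^{p^\uparrow}+\|u\|_{p_s^*}^{2p^\uparrow}\bigr),
\]
and the Sobolev embedding $W^{s,p}\hookrightarrow L^q$ for $p\leq q\leq p_s^*$ controls $\|u\|_{p_g}^{p_g}$ by $\|u\|^{p_g}$. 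Collecting,
\[
I_{\tilde{a}}[u]\geq \delta\|u\|^p-C_1\|u\|^{2p^\downarrow}-C_2\|u\|^{p^\downarrow+p^\uparrow}-C_3\|u\|^{2p^\uparrow}-C_4\|u\|^{p_g}.
\]
Because each of the exponents $2p^\downarrow$, $p^\downarrow+p^\uparrow$, $2p^\uparrow$, $p_g$ is strictly greater than $p$, the first term dominates once $\|u\|=\rho$ is small enough, producing the desired constant $d>0$.

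For the second step, I would fix any nonzero $u_0\in W^{s,p}(\mathbb{R}^N)$ and study the one-parameter family $tu_0$. The positive terms scale precisely as $t^p$, whereas the Choquard term contains the contribution
\[
-\tfrac{b}{2(p^\uparrow)^2}t^{2p^\uparrow}\int_{\mathbb{R}^N}(K\ast|u_0|^{p^\uparrow})|u_0|^{p^\uparrow}\,dx,
\]
and $2p^\uparrow=\tfrac{N+\alpha}{N-sp}\,p>p_s^*\geq p_g$. Hence the $t^{2p^\uparrow}$ term dominates every other term as $t\to\infty$, forcing $I_{\tilde{a}}[tu_0]\to -\infty$. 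Choosing $t$ large enough that $t\|u_0\|>\rho$ and $I_{\tilde{a}}[tu_0]\leq 0$, and setting $v=tu_0$, completes the proof.

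No step is genuinely hard; the only point requiring a little care is the exponent comparison confirming that $2p^\uparrow$ is strictly larger than $p_s^*$ (and hence $p_g$), so that the highest-order negative term indeed dominates along the ray. The use of $\tilde{a}\geq c_0$ is the only place where the qualitative assumption on $\tilde{a}$ enters, and it is used only as a lower bound, so no further structure of $\tilde{a}$ is needed here.
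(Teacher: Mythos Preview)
Your proof is correct and follows essentially the same route as the paper: bound the nonlocal and local nonlinear terms via Hardy--Littlewood--Sobolev and Sobolev embeddings to obtain $I_{\tilde a}[u]\geq \delta\|u\|^p-\sum_i C_i\|u\|^{q_i}$ with all $q_i>p$, then send $t\to\infty$ along a ray $tu_0$. One small remark: for the second step you do not actually need the sharp comparison $2p^\uparrow>p_s^*$; it is enough that any one negative term has exponent strictly larger than $p$ (e.g.\ $2p^\downarrow=\tfrac{N+\alpha}{N}p>p$), which is the observation the paper uses.
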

\begin{proof}
By the Sobolev inequality, we have
\begin{align*}
I_{\tilde{a}}[u] &\geq C_1 \|u\|^p-C_2\|u\|^{2\cdot p^\uparrow} -C_3\|u\|^{2\cdot p^\downarrow}-C_4 \|u\|^{p^\downarrow+p^\uparrow}-C_5 \|u\|^{p_g}.
\end{align*}
Since $2\cdot p^\downarrow, p^\downarrow+p^\uparrow, 2\cdot p^\uparrow, p_g>p$, for $\rho>0$ sufficiently small, there exists $d>0$ such that $I_{\tilde{a}}[u]\geq d$ if $\|u\|=\rho$.\par
On the other hand, for fixed $u\in W^{s,p}(\mathbb{R}^N)$ and $t>0$,
\begin{align*}
I_{\tilde{a}}[tu] &\leq C_1' t^p\|u\|^p-C_2' t^{2\cdot p^\uparrow}\|u\|^{2\cdot p^\uparrow} -C_3' t^{2\cdot p^\downarrow}\|u\|^{2\cdot p^\downarrow} 
\\ &\phantom{=}\quad
-C_4' t^{p^\downarrow + p^\uparrow}\|u\|^{p^\downarrow+p^\uparrow}-C_5' t^{p_g} \|u\|^{p_g}.
\end{align*}
Hence, for $t>0$ sufficiently large, we have $I_{\tilde{a}}[tu] <0$. 
\end{proof}

Define
\[
c_{M;\tilde{a}}\coloneqq \inf_{\gamma\in \Gamma}\sup_{[0,1]}I_{\tilde{a}}\circ \gamma,
\]
where
\[
\Gamma\coloneqq \{\gamma\in C([0,1];W^{s,p}(\mathbb{R}^N))\mid \gamma(0)=0,I_{\tilde{a}} [\gamma(1)]<0\}.
\]
If $p<p_g<p_s^*$, let
\begin{align*}
c^*&=\min\left\{\left(\frac{1}{p}-\frac{1}{2\cdot p^\downarrow}\right) \left(\frac{p^\downarrow}{b}\right)^{\frac{p}{2\cdot p^\downarrow-p}} (a S^\downarrow)^{\frac{2\cdot p^\downarrow}{2\cdot p^\downarrow-p}}, 
\left(\frac{1}{p}-\frac{1}{2\cdot p^\uparrow}\right) \left(\frac{p^\uparrow}{b}\right)^{\frac{p}{2\cdot p^\uparrow-p}} (S^\uparrow)^{\frac{2\cdot p^\uparrow}{2\cdot p^\uparrow-p}}\right\}.
\end{align*}
If $p_g=p_s^*$, let
\[
c^*=\min\left\{\left(\frac{1}{p}-\frac{1}{2\cdot p^\downarrow}\right) \left(\frac{p^\downarrow}{b}\right)^{\frac{p}{2\cdot p^\downarrow-p}} (a S^\downarrow)^{\frac{2\cdot p^\downarrow}{2\cdot p^\downarrow-p}}, \left(\frac{1}{p}-\frac{1}{2\cdot p^\uparrow}\right) A(\varepsilon_g)\right\},
\]
where $A(\varepsilon_g)>0$ is the constant which satisfies
\[
(S^*)^{\frac{2\cdot p^\uparrow}{p}} A(\varepsilon_g)=\frac{b}{p^\uparrow} C_U A(\varepsilon_g)^{\frac{2\cdot p^\uparrow}{p}}+ \varepsilon_g (S^*)^{\frac{p_s^*}{p}} A(\varepsilon_g)^{\frac{p_s^*}{p}},
\]
\[
C_U\coloneqq\frac{1}{(S^*)^{\frac{N+\alpha}{ps}}} \int_{\mathbb{R}^N}(K\ast |U|^{p^\uparrow}) |U|^{p^\uparrow} dx
\]
for the extremal function $U$ corresponding to $S^*$ such that $[U]_{s,p}^p=\|U\|_{p_s^*}^{p_s^*}$. 

\begin{lemma}
Assume $N\geq \max\{2ps+\alpha,p^2 s\}$. If $p<p_g<p_s^*$, then we have $c_{M;\tilde{a}}<c^*$. Moreover, if $p_g=p_s^*$, then for $\varepsilon_g>0$ small enough, we have $c_{M;\tilde{a}}<c^*$. 
\end{lemma}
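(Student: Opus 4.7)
The plan is to bound $c_{M;\tilde a}$ from above by constructing an admissible straight-line path $\gamma(t)=tT\phi$ with $T$ large enough that $I_{\tilde a}[T\phi]<0$; this reduces the claim to producing a test function $\phi$ with $\sup_{t\geq 0}I_{\tilde a}[t\phi]<c^*$. Since $c^*$ is the minimum of two candidates (the first involving $S^\downarrow$ and the second involving $S^\uparrow$, resp.\ $A(\varepsilon_g)$ if $p_g=p_s^*$), I will produce possibly distinct test functions delivering a strict upper bound below each candidate separately; $c_{M;\tilde a}$, being an infimum over paths, then lies below both and hence below $c^*$.

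For the bound against the second (upper-critical) candidate I take the concentrating rescaling $U_\lambda(x)=\lambda^{(N-ps)/p}U(\lambda x)$ of the $S^*$-extremal $U$ from Proposition~\ref{asymptotic behavior}, normalized so $[U]_{s,p}^p=\|U\|_{p_s^*}^{p_s^*}$. A direct change of variables, using Proposition~\ref{asymptotic behavior} together with the dimensional assumptions $N\geq p^2 s$ and $N\geq 2ps+\alpha$ to ensure finiteness of every integral, yields the scaling table: $[U_\lambda]_{s,p}^p$, $\|U_\lambda\|_{p_s^*}^{p_s^*}$ and $\int(K\ast|U_\lambda|^{p^\uparrow})|U_\lambda|^{p^\uparrow}dx$ are invariant; $\|U_\lambda\|_p^p=\lambda^{-ps}\|U\|_p^p$; the lower HLS integral is $O(\lambda^{-(N+\alpha)ps/N})$; the mixed HLS integral is $O(\lambda^{-(N+\alpha)ps/(2N)})$; $\|U_\lambda\|_{p_g}^{p_g}=\lambda^{p_g(N-ps)/p-N}\|U\|_{p_g}^{p_g}$; and $\int\tilde a|U_\lambda|^p dx\sim\lambda^{-ps}\tilde a(0)\|U\|_p^p$ by concentration at the origin. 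The dominant block of $I_{\tilde a}[tU_\lambda]$ is the upper-critical energy (together with the $\varepsilon_g|u|^{p_s^*}$ term when $p_g=p_s^*$), whose maximum over $t$ tends to the upper candidate of $c^*$ as $\lambda\to\infty$ (matched via the defining relation of $A(\varepsilon_g)$ in the critical case). The strict gap then follows from the negative subcritical correction $-(\varepsilon_g/p_g)\|U_\lambda\|_{p_g}^{p_g}t^{p_g}$ of order $\lambda^{p_g(N-ps)/p-N}$, which decays strictly slower than the positive $\tilde a$-correction of order $\lambda^{-ps}$ because $p_g>p$ gives $p_g(N-ps)/p-N>-ps$; for $p_g=p_s^*$ the correction is of order one and the estimate is closed by choosing $\varepsilon_g$ small.

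For the bound against the first (lower-critical) candidate I use instead the spread-out rescaling $u^\downarrow_\tau(x)=\tau^{-N/p}u^\downarrow(x/\tau)$ of an $S^\downarrow$-extremal: $\|u^\downarrow_\tau\|_p^p$ and the lower HLS integral are invariant, while $[u^\downarrow_\tau]_{s,p}^p=\tau^{-ps}[u^\downarrow]_{s,p}^p$ and the upper/mixed HLS integrals and $\|u^\downarrow_\tau\|_{p_g}^{p_g}$ all decay in $\tau$, so the leading block reduces to the pure lower-critical energy, whose $\sup_t$ is exactly the first candidate. Strictness at large $\tau$ is obtained from the $\tilde a$-deficit $\int(a-\tilde a)|u^\downarrow_\tau|^p dx>0$ (which, depending on the decay rate of $a-\tilde a$ at infinity, can be arranged to dominate the $\tau^{-ps}$ Gagliardo bias) together with the negative subcritical correction. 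The main obstacle is the careful bookkeeping of the correction orders in both estimates; the most delicate point is the upper estimate for $p\neq 2$, where the $S^*$-extremal $U$ is generally not extremal for $S^\uparrow$ so that the dominant-block sup may strictly exceed the upper candidate of $c^*$. This is resolved by replacing $U$ with an approximate $S^\uparrow$-minimizer and performing a joint limit in which the Brezis--Nirenberg type correction hierarchy still closes the estimate.
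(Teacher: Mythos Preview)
Your overall strategy—bounding $c_{M;\tilde a}$ by $\sup_{t\geq 0}I_{\tilde a}[t\phi]$ for suitably rescaled extremals, treating the two candidates of $c^*$ separately—matches the paper's. However, two of your substeps have genuine gaps.

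\textbf{Upper-critical bound, subcritical $p_g$, general $p$.} You correctly identify that for $p\neq 2$ the $S^*$-extremal $U$ does not realize $S^\uparrow$, so the dominant block of $\sup_t I_{\tilde a}[tU_\lambda]$ converges to a value \emph{strictly larger} than the second candidate of $c^*$; no $o(1)$ correction can close such an order-one overshoot. Your proposed fix (approximate $S^\uparrow$-minimizers plus a joint limit) is not fleshed out and would require uniform control of all correction terms across the approximation. The paper avoids this entirely by using the actual $S^\uparrow$-extremal $u^\uparrow$, rescaled as $u^\uparrow_\lambda(x)=\lambda^{-(N/p-s)}u^\uparrow(x/\lambda)$ with $\lambda\to 0^+$, for which the dominant block equals the target exactly. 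The truncated $S^*$-extremal $\varphi_\delta U_\varepsilon$ is only used for the case $p_g=p_s^*$, where the theorem is stated only for $p=2$ and where the definition of $A(\varepsilon_g)$ is anyway tailored to $U$ via $C_U$.

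\textbf{Lower-critical bound, strictness.} Your primary mechanism is the deficit $\int(a-\tilde a)|u^\downarrow_\tau|^p\,dx$, but $\tilde a$ is \emph{given}, with $a-\tilde a(x)\to 0$ at an uncontrolled rate, so this deficit may well decay faster than the positive Gagliardo correction $\tau^{-ps}$; you cannot ``arrange'' otherwise. Your fallback ``negative subcritical correction'' is left unspecified. The paper does not use the $\tilde a$-deficit at all; it uses the mixed HLS correction, whose exponent is $-p^\uparrow s$. The key observation—and the entire reason for the hypothesis $N\geq 2ps+\alpha$—is that this forces $p^\uparrow\leq p$, so $\tau^{-p^\uparrow s}$ decays no faster than $\tau^{-ps}$ and for large $\tau$ this negative term dominates the Gagliardo bias. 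Your proposal never isolates this rate comparison, which is precisely where the dimensional restriction enters the argument. (Symmetrically, for the upper-critical estimate the paper obtains strictness from the mixed HLS term of order $\lambda^{p^\downarrow s}$, using only $p^\downarrow<p$; your $p_g$-term mechanism also works there, but requires $\|U\|_{p_g}<\infty$ and hence implicitly $N>p^2 s$.)

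A minor point: your un-truncated $U_\lambda$ requires $U\in L^p$, which fails at the borderline $N=p^2 s$; the paper's cutoff $\varphi_\delta U_\varepsilon$ handles this and is also needed for the sharp error estimates on $[u_\varepsilon^*]_{s,p}^p$ and the HLS integrals.
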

\begin{proof}
Let $u^\downarrow$ and $u^\uparrow$ be respectively the extremal functions corresponding to $S^\downarrow$ and $S^\uparrow$ such that
\[
\int_{\mathbb{R}^N}|u^\downarrow|^{p}dx= \int_{\mathbb{R}^N}(K\ast |u^\downarrow|^{p^\downarrow}) |u^\downarrow|^{p^\downarrow} dx
\]
and
\[
[u^\uparrow]_{s,p}^p= \int_{\mathbb{R}^N}(K\ast |u^\uparrow|^{p^\uparrow}) |u^\uparrow|^{p^\uparrow} dx.
\]
%Such $u^\downarrow$ and $u^\uparrow$ actually exist because the quotients which appear in the definitions of the best constants are invariant under multiplication $u\mapsto cu$ by a constant $c$. 
Then, by the definition of $S^\downarrow$, it automatically holds that $\|u^\downarrow\|_p^p=(S^\downarrow)^{\frac{2\cdot p^\downarrow}{2\cdot p^\downarrow-p}}$. Define
\[
u_\lambda^\downarrow (x)\coloneqq \lambda^{-N/p}u^\downarrow (x/\lambda),\quad u_\lambda^\uparrow (x)\coloneqq \lambda^{-\left(\frac{N}{p}-s\right)}u^\downarrow (x/\lambda).
\]
Let $t_\lambda^\downarrow,t_\lambda^\uparrow>0$ be such that $I_{\tilde{a}} [t_\lambda^\downarrow u_\lambda^\downarrow ]=\displaystyle\max_{t>0} I_{\tilde{a}} [t u_\lambda^\downarrow ]$, $I_{\tilde{a}} [t_\lambda^\uparrow u_\lambda^\uparrow ]=\displaystyle\max_{t>0} I_{\tilde{a}} [t u_\lambda^\uparrow ]$. By direct calculation, we have
\begin{align*}
0&=\left. \frac{d}{dt}\right|_{t=t_\lambda^\downarrow} I_{\tilde{a}} [t u_\lambda^\downarrow]\\
&= (t_\lambda^\downarrow)^{p-1} \lambda^{-ps}[u^\downarrow]_{s,p}^p+ (t_\lambda^\downarrow)^{p-1}\int_{\mathbb{R}^N}\tilde{a}(\lambda x)| u^\downarrow|^p dx \\
&\phantom{=} -\frac{b}{p^\downarrow} (t_\lambda^\downarrow)^{2\cdot p^\downarrow-1} \int_{\mathbb{R}^N}(K\ast |u^\downarrow|^{p^\downarrow}) |u^\downarrow|^{p^\downarrow} dx \\
&\phantom{=} -\frac{b}{p^\uparrow} (t_\lambda^\downarrow)^{2\cdot p^\uparrow-1} \lambda^{-\frac{2N\cdot p^\uparrow}{p}+N+\alpha}\int_{\mathbb{R}^N}(K\ast |u^\downarrow|^{p^\uparrow}) |u^\downarrow|^{p^\uparrow} dx \\
&\phantom{=}-\frac{b(p^\downarrow + p^\uparrow)}{p^\downarrow\cdot p^\uparrow} (t_\lambda^\downarrow)^{p^\downarrow + p^\uparrow-1} \lambda^{-\frac{N(p^\downarrow + p^\uparrow)}{p}+N+\alpha}\int_{\mathbb{R}^N}(K\ast |u^\downarrow|^{p^\downarrow}) |u^\downarrow|^{p^\uparrow} dx \\
&\phantom{=} -\varepsilon_g(t_\lambda^\downarrow)^{p_g-1} \lambda^{-\frac{N}{p}p_g+N}\int_{\mathbb{R}^N} |u^\downarrow|^{p_g} dx
\end{align*}
Note that $p-1<2\cdot p^\downarrow -1 \; (<p^\downarrow + p^\uparrow-1 <2\cdot p^\uparrow-1)$. If $t_\infty^\downarrow \coloneqq \displaystyle\limsup_{\lambda\to\infty} t_\lambda^\downarrow=\infty$, dividing the both side by $(t_\lambda^\downarrow)^{p-1}$ and taking the limit as $\lambda\to\infty$, we can see that the right hand side goes to $-\infty$ and get a contradiction. Therefore, $t_\infty^\downarrow <\infty$. Taking the limit as $\lambda\to\infty$ again, we obtain
\begin{align*}
0&=a (t_\infty^\downarrow)^{p-1} \int_{\mathbb{R}^N} |u^\downarrow|^{p} dx-\frac{b(t_\infty^\downarrow)^{2\cdot p^\downarrow-1}}{p^\downarrow} \int_{\mathbb{R}^N}(K\ast |u^\downarrow|^{p^\downarrow}) |u^\downarrow|^{p^\downarrow} dx \\
&=\left(a (t_\infty^\downarrow)^{p-1} -\frac{b(t_\infty^\downarrow)^{2\cdot p^\downarrow-1}}{p^\downarrow}\right) \int_{\mathbb{R}^N} |u^\downarrow|^{p} dx
\end{align*}
and thus $t_\infty^\downarrow=(a\cdot p^\downarrow /b)^{\frac{1}{2\cdot p^\downarrow-p}}$. \par
Note that if $N>2ps+\alpha$, then since $p^\uparrow<p$, we have
\begin{equation}\label{purpose of psharp<p}
-\frac{N(p^\downarrow + p^\uparrow)}{p}+N+\alpha=-\frac{N+\alpha}{N-ps}\cdot\frac{ps}{2}=-p^\uparrow s>-ps.
\end{equation}
Therefore, for $\lambda>0$ large enough, 
\begin{align*}
&\phantom{=}\left(a (t_\lambda^\downarrow)^{p-1} -\frac{b(t_\lambda^\downarrow)^{2\cdot p^\downarrow-1}}{p^\downarrow}\right) \int_{\mathbb{R}^N} |u^\downarrow|^{p} dx\\
&\geq (t_\lambda^\downarrow)^{p-1} \int_{\mathbb{R}^N} \tilde{a}(\lambda x)|u^\downarrow|^{p} dx-\frac{b(t_\lambda^\downarrow)^{2\cdot p^\downarrow-1}}{p^\downarrow} \int_{\mathbb{R}^N} |u^\downarrow|^{p} dx>0
\end{align*}
and thus
\[
a (t_\lambda^\downarrow)^{p-1} -\frac{b(t_\lambda^\downarrow)^{2\cdot p^\downarrow-1}}{p^\downarrow}>0.
\]
A direct calculation and \eqref{purpose of psharp<p} also yield
\begin{align*}
I_{\tilde{a}} [t_\lambda^\downarrow u_\lambda^\downarrow] &= \frac{(t_\lambda^\downarrow)^{p}}{p} \lambda^{-ps}[u^\downarrow]_{s,p}^p+ \frac{(t_\lambda^\downarrow)^{p}}{p}\int_{\mathbb{R}^N}\tilde{a}(\lambda x)| u^\downarrow|^p dx \\
&\phantom{=}-\frac{b}{2(p^\downarrow)^2} (t_\lambda^\downarrow)^{2\cdot p^\downarrow} \int_{\mathbb{R}^N}(K\ast |u^\downarrow|^{p^\downarrow}) |u^\downarrow|^{p^\downarrow} dx \\
&\phantom{=}-\frac{b}{2(p^\uparrow)^2} (t_\lambda^\downarrow)^{2\cdot p^\uparrow} \lambda^{-\frac{2N\cdot p^\uparrow}{p}+N+\alpha}\int_{\mathbb{R}^N}(K\ast |u^\downarrow|^{p^\uparrow}) |u^\downarrow|^{p^\uparrow} dx \\
&\phantom{=}-\frac{b}{p^\downarrow\cdot p^\uparrow} (t_\lambda^\downarrow)^{p^\downarrow + p^\uparrow} \lambda^{-\frac{N(p^\downarrow + p^\uparrow)}{p}+N+\alpha}\int_{\mathbb{R}^N}(K\ast |u^\downarrow|^{p^\downarrow}) |u^\downarrow|^{p^\uparrow} dx \\
&\phantom{=} -\frac{\varepsilon_g}{p_g}(t_\lambda^\downarrow)^{p_g} \lambda^{-\frac{N}{p}p_g+N}\int_{\mathbb{R}^N} |u^\downarrow|^{p_g} dx \\
&< \left(\frac{1}{p}a (t_\lambda^\downarrow)^{p} -\frac{b(t_\lambda^\downarrow)^{2\cdot p^\downarrow}}{2(p^\downarrow)^2}\right) \int_{\mathbb{R}^N} |u^\downarrow|^{p} dx \\
&\eqqcolon \ell_1(t_\lambda^\downarrow) \int_{\mathbb{R}^N} |u^\downarrow|^{p} dx.
\end{align*}
We already know $\ell_1'(t_\infty^\downarrow)=0$, $\ell_1'(t_\lambda^\downarrow)>0$ for $\lambda>0$ large enough, which implies
\begin{align*}
I_{\tilde{a}} [t_\lambda^\downarrow u_\lambda^\downarrow] &<\ell_1(t_\lambda^\downarrow) \int_{\mathbb{R}^N} |u^\downarrow|^{p} dx\leq \ell_1(t_\infty^\downarrow) \int_{\mathbb{R}^N} |u^\downarrow|^{p} dx \\
&= \left(\frac{1}{p}a (a\cdot p^\downarrow /b)^{\frac{p}{2\cdot p^\downarrow-p}} -\frac{b(a\cdot p^\downarrow /b)^{\frac{2\cdot p^\downarrow}{2\cdot p^\downarrow-p}}}{2(p^\downarrow)^2}\right)(S^\downarrow)^{\frac{2\cdot p^\downarrow}{2\cdot p^\downarrow-p}}\\
&=\left(\frac{1}{p}-\frac{1}{2\cdot p^\downarrow}\right) \left(\frac{p^\downarrow}{b}\right)^{\frac{p}{2\cdot p^\downarrow-p}} (a S^\downarrow)^{\frac{2\cdot p^\downarrow}{2\cdot p^\downarrow-p}}.
\end{align*}
Similarly,
\begin{align*}
0&=\left. \frac{d}{dt}\right|_{t=t_\lambda^\uparrow} I_{\tilde{a}} [t u_\lambda^\uparrow]\\
&= \left((t_\lambda^\uparrow)^{p-1} -\frac{b}{p^\uparrow} (t_\lambda^\uparrow)^{2\cdot p^\uparrow-1} \right)[u^\uparrow]_{s,p}^p+ (t_\lambda^\uparrow)^{p-1}\lambda^{ps}\int_{\mathbb{R}^N}\tilde{a}(\lambda x)| u^\uparrow|^p dx \\
&\phantom{=}-\frac{b}{p^\downarrow} (t_\lambda^\uparrow)^{2\cdot p^\downarrow-1} \lambda^{-\frac{2}{p}(N-ps)p^\downarrow+N+\alpha}\int_{\mathbb{R}^N}(K\ast |u^\uparrow|^{p^\downarrow}) |u^\uparrow|^{p^\downarrow} dx \\
&\phantom{=}-\frac{b(p^\downarrow + p^\uparrow)}{p^\downarrow\cdot p^\uparrow} (t_\lambda^\uparrow)^{p^\downarrow + p^\uparrow-1} \lambda^{-\frac{(N-ps)(p^\downarrow + p^\uparrow)}{p}+N+\alpha}\int_{\mathbb{R}^N}(K\ast |u^\uparrow|^{p^\downarrow}) |u^\uparrow|^{p^\uparrow} dx \\
&\phantom{=} -\varepsilon_g(t_\lambda^\uparrow)^{p_g-1} \lambda^{-\frac{N-ps}{p}p_g+N}\int_{\mathbb{R}^N} |u^\uparrow|^{p_g} dx.
\end{align*}
If $t_0^\uparrow \coloneqq \displaystyle\limsup_{\lambda\to+0} t_\lambda^\uparrow=\infty$, dividing the both side by $(t_\lambda^\uparrow)^{p-1}$ and taking the limit as $\lambda\to +0$, we can see that the right hand side goes to $-\infty$ and get a contradiction. Therefore, $t_0^\uparrow <\infty$. Taking the limit as $\lambda\to +0$ again, we obtain
\[
(t_0^\uparrow)^{p-1} -\frac{b}{p^\uparrow} (t_0^\uparrow)^{2\cdot p^\uparrow-1}=0,
\]
that is, $t_0^\uparrow=(p^\uparrow/b)^{\frac{1}{2\cdot p^\uparrow-p}}$. \par
In a similar way as above, noting that
\[
-\frac{(N-ps)(p^\downarrow + p^\uparrow)}{p}+N+\alpha=p^\downarrow s<ps,
\]
for $\lambda>0$ small enough, we can obtain
\[
0\leq \ell_2'(t_\lambda^\uparrow)\leq C\lambda^{p^\downarrow s}
\]
and hence, 
\[
\ell_2(t_\lambda^\uparrow)<\ell_2(t_0^\uparrow) + C'\lambda^{p^\downarrow s}\cdot (t_\lambda^\uparrow-t_0^\uparrow),
\]
where
\[
\ell_2(t)\coloneqq \frac{t^{p}}{p} -\frac{b}{2(p^\uparrow)^2} t^{2\cdot p^\uparrow}.
\]
Therefore, 
\begin{align*}
I_{\tilde{a}} [t_\lambda^\uparrow u_\lambda^\uparrow] &= \left(\frac{(t_\lambda^\uparrow)^{p}}{p} -\frac{b}{2(p^\uparrow)^2} (t_\lambda^\uparrow)^{2\cdot p^\uparrow} \right)[u^\uparrow]_{s,p}^p+ \frac{(t_\lambda^\uparrow)^{p}}{p}\lambda^{ps}\int_{\mathbb{R}^N}\tilde{a}(\lambda x)| u^\uparrow|^p dx \\
&\phantom{=}-\frac{b}{2(p^\downarrow)^2} (t_\lambda^\uparrow)^{2\cdot p^\downarrow} \lambda^{-\frac{2}{p}(N-ps)p^\downarrow+N+\alpha}\int_{\mathbb{R}^N}(K\ast |u^\uparrow|^{p^\downarrow}) |u^\uparrow|^{p^\downarrow} dx \\
&\phantom{=}-\frac{b}{p^\downarrow\cdot p^\uparrow} (t_\lambda^\uparrow)^{p^\downarrow + p^\uparrow} \lambda^{-\frac{(N-ps)(p^\downarrow + p^\uparrow)}{p}+N+\alpha}\int_{\mathbb{R}^N}(K\ast |u^\uparrow|^{p^\downarrow}) |u^\uparrow|^{p^\uparrow} dx \\
&\phantom{=} -\frac{\varepsilon_g (t_\lambda^\uparrow)^{p_g}}{p_g}\lambda^{-\frac{N-ps}{p}p_g+N}\int_{\mathbb{R}^N} |u^\uparrow|^{p_g} dx \\
&<\left(\frac{(t_\lambda^\uparrow)^{p}}{p} -\frac{b}{2(p^\uparrow)^2} (t_\lambda^\uparrow)^{2\cdot p^\uparrow} \right)[u^\uparrow]_{s,p}^p-C''\lambda^{p^\downarrow s} \\
&<(\ell_2(t_0^\uparrow) + C'\lambda^{p^\downarrow s}\cdot (t_\lambda^\uparrow-t_0^\uparrow)) [u^\uparrow]_{s,p}^p-C''\lambda^{p^\downarrow s} \\
&\nearrow \left(\frac{(t_0^\uparrow)^{p}}{p} -\frac{b}{2(p^\uparrow)^2} (t_0^\uparrow)^{2\cdot p^\uparrow} \right)[u^\uparrow]_{s,p}^p\quad (\lambda\to +0) \\
&= \left(\frac{1}{p}-\frac{1}{2\cdot p^\uparrow}\right) \left(\frac{p^\uparrow}{b}\right)^{\frac{p}{2\cdot p^\uparrow-p}} (S^\uparrow)^{\frac{2\cdot p^\uparrow}{2\cdot p^\uparrow-p}}.
\end{align*}

Let $U$ be the extremal function corresponding to $S^*$ such that $[U]_{s,p}^p=\|U\|_{p_s^*}^{p_s^*}$. Then, by the definition of $S^*$, automatically $\|U\|_{p_s^*}^{p_s^*} =(S^*)^{N/(ps)}$. Take $\varphi_\delta \in C_c^\infty(\mathbb{R}^N;[0,1])$ such that $\varphi_\delta=1$ in $B_\delta (0)$ and $\varphi_\delta=0$ in $\mathbb{R}^N\setminus B_{2\delta}(0)$. We define $U_\varepsilon (x)\coloneqq \varepsilon^{s-\frac{N}{p}}U(x/\varepsilon)$ and $u_\varepsilon^{*}\coloneqq \varphi_\delta U_\varepsilon$. Then, $[U_\varepsilon]_{s,p}^p=\|U_\varepsilon\|_{p_s^*}^{p_s^*}= (S^*)^{N/(ps)}$. We are now in a position to analyze the behavior of $u_\varepsilon^*$ as $\varepsilon\to +0$. \par
As in \cite{The Brezis-Nirenberg problem for the fractional p-Laplacian}, we know
\begin{align*}
[u_\varepsilon^*]_{s,p}^p &= (S^*)^{N/(ps)}+O(\varepsilon^{\frac{N-ps}{p-1}}), \\
\|u_\varepsilon^*\|_{p_s^*}^{p_s^*} &= (S^*)^{N/(ps)}+O(\varepsilon^{N/(p-1)}) \\
\|u_\varepsilon^*\|_p^p &\geq \left\{\begin{array}{ll} d\cdot\varepsilon^{ps} |\log\varepsilon |+O(\varepsilon^{ps}) &(N=sp^2)\\ d\cdot\varepsilon^{ps} +O(\varepsilon^{\frac{N-ps}{p-1}}) &(N>sp^2) \end{array}\right. .
\end{align*}
Moreover,
\begin{align*}
\left(\int_{\mathbb{R}^N}(K\ast |u_\varepsilon^*|^{p^\uparrow}) |u_\varepsilon^*|^{p^\uparrow} dx \right)^{\frac{p}{2\cdot p^\uparrow}} 
&\leq C(N,\alpha)^{\frac{p}{2\cdot p^\uparrow}} \|u_\varepsilon^*\|_{p_s^*}^p \\
&= C(N,\alpha)^{\frac{p}{2\cdot p^\uparrow}} ((S^*)^{N/(ps)}+O(\varepsilon^{N/(p-1)}))^{p/p_s^*} \\
&= C(N,\alpha)^{\frac{p}{2\cdot p^\uparrow}} (S^*)^{N/(p_s^* s)}+O(\varepsilon^{(N-ps)/(p-1)}) \\
&= C(N,\alpha)^{\frac{p}{2\cdot p^\uparrow}} (S^*)^{\frac{N}{ps}-1}+O(\varepsilon^{(N-ps)/(p-1)}).
\end{align*}
On the other hand, 
\begin{align*}
\int_{\mathbb{R}^N} (K\ast |u_\varepsilon^*|^{p^\uparrow}) |u_\varepsilon^*|^{p^\uparrow} dx 
&\geq \int_{B_\delta (0)} (K\ast |u_\varepsilon^*|^{p^\uparrow}|_{B_\delta (0)}) |u_\varepsilon^*|^{p^\uparrow} |_{B_\delta (0)}) dx \\
&= \int_{B_\delta (0)} \int_{B_\delta (0)} K(x-y)|U_\varepsilon(x)|^{p^\uparrow} |U_\varepsilon(y)|^{p^\uparrow}dxdy \\
&= \int_{\mathbb{R}^N} (K\ast |U_\varepsilon|^{p^\uparrow}) |U_\varepsilon|^{p^\uparrow} dx \\
&\phantom{=}-2 \int_{B_\delta (0)} \int_{\mathbb{R}^N\setminus B_\delta (0)} K(x-y)|U_\varepsilon(x)|^{p^\uparrow} |U_\varepsilon(y)|^{p^\uparrow}dxdy \\
&\phantom{=} -\int_{\mathbb{R}^N\setminus B_\delta (0)} \int_{\mathbb{R}^N\setminus B_\delta (0)} K(x-y)|U_\varepsilon(x)|^{p^\uparrow} |U_\varepsilon(y)|^{p^\uparrow}dxdy.
\end{align*}
Here, using Proposition \ref{asymptotic behavior}, we can see
\begin{align*}
&\phantom{=}\int_{B_\delta (0)} \int_{\mathbb{R}^N\setminus B_\delta (0)} K(x-y)|U_\varepsilon(x)|^{p^\uparrow} |U_\varepsilon(y)|^{p^\uparrow}dxdy \\
&\leq C\varepsilon^{2\cdot p^\uparrow(s-N/p)}\int_{B_\delta (0)} \int_{\mathbb{R}^N\setminus B_\delta (0)} K(x-y) (1+|x/\varepsilon|^{\frac{p}{p-1}})^{p^\uparrow (s-N/p)} (1+|y/\varepsilon|^{\frac{p}{p-1}})^{p^\uparrow (s-N/p)} dxdy \\
&\leq C\varepsilon^{2\cdot p^\uparrow(s-N/p)-2\frac{p}{p-1}p^\uparrow (s-N/p)}\int_{B_\delta (0)} \int_{\mathbb{R}^N\setminus B_\delta (0)} K(x-y) \\
&\phantom{=}\quad\quad\quad(\varepsilon^{\frac{p}{p-1}} +|x|^{\frac{p}{p-1}})^{p^\uparrow (s-N/p)} (\varepsilon^{\frac{p}{p-1}}+|y|^{\frac{p}{p-1}})^{p^\uparrow (s-N/p)} dxdy \\
&\leq O(\varepsilon^{\frac{N+\alpha}{p-1}})\left(\int_{\mathbb{R}^N\setminus B_\delta (0)}\frac{1}{(\varepsilon^{\frac{p}{p-1}} +|x|^{\frac{p}{p-1}})^N}dx\right)^{\frac{N+\alpha}{2N}} \left(\int_{B_\delta (0)}\frac{1}{(\varepsilon^{\frac{p}{p-1}} +|y|^{\frac{p}{p-1}})^N}dy\right)^{\frac{N+\alpha}{2N}} \\
&\leq O(\varepsilon^{\frac{N+\alpha}{p-1}})\left(\int_{\mathbb{R}^N\setminus B_\delta (0)}\frac{1}{|x|^{\frac{p}{p-1}N}}dx\right)^{\frac{N+\alpha}{2N}} \left(\int_{0}^{\delta}\frac{r^{N-1}}{(\varepsilon^{\frac{p}{p-1}} +r^{\frac{p}{p-1}})^N}dr\right)^{\frac{N+\alpha}{2N}} \\
&= O(\varepsilon^{\frac{N+\alpha}{2(p-1)}}) \left(\int_0^{\delta/\varepsilon}\frac{r^{N-1}}{(1+r^{\frac{p}{p-1}})^N}dr\right)^{\frac{N+\alpha}{2N}} \\
&\leq O(\varepsilon^{\frac{N+\alpha}{2(p-1)}}) \left(\int_0^{\infty}\frac{r^{N-1}}{(1+r^{\frac{p}{p-1}})^N}dr\right)^{\frac{N+\alpha}{2N}} \\
&\leq O(\varepsilon^{\frac{N+\alpha}{2(p-1)}}).
\end{align*}
Similarly,
\begin{align*}
&\phantom{=}\int_{\mathbb{R}^N\setminus B_\delta (0)} \int_{\mathbb{R}^N\setminus B_\delta (0)} K(x-y)|U_\varepsilon(x)|^{p^\uparrow} |U_\varepsilon(y)|^{p^\uparrow}dxdy \\
&\leq C\varepsilon^{2\cdot p^\uparrow(s-N/p)}\int_{\mathbb{R}^N\setminus B_\delta (0)} \int_{\mathbb{R}^N\setminus B_\delta (0)} K(x-y)(1+|x/\varepsilon|^{\frac{p}{p-1}})^{p^\uparrow (s-N/p)} 
(1+|y/\varepsilon|^{\frac{p}{p-1}})^{p^\uparrow (s-N/p)} dxdy \\
&\leq O(\varepsilon^{\frac{N+\alpha}{p-1}})\left(\int_{\mathbb{R}^N\setminus B_\delta (0)}\frac{1}{(\varepsilon^{\frac{p}{p-1}} +|x|^{\frac{p}{p-1}})^N}dx\right)^{\frac{N+\alpha}{2N}} 
\left(\int_{\mathbb{R}^N\setminus B_\delta (0)}\frac{1}{(\varepsilon^{\frac{p}{p-1}} +|y|^{\frac{p}{p-1}})^N}dy\right)^{\frac{N+\alpha}{2N}} \\
&\leq O(\varepsilon^{\frac{N+\alpha}{p-1}})\left(\int_{\mathbb{R}^N\setminus B_\delta (0)}\frac{1}{|x|^{\frac{p}{p-1}N}}dx\right)^{\frac{N+\alpha}{N}}\\
&= O(\varepsilon^{\frac{N+\alpha}{p-1}}).
\end{align*}
Eventually, 
\begin{align*}
&\phantom{=} \int_{\mathbb{R}^N}(K\ast |u_\varepsilon^*|^{p^\uparrow}) |u_\varepsilon^*|^{p^\uparrow} dx \\
&\geq C_U(S^*)^{\frac{N+\alpha}{ps}}-|O(\varepsilon^{\frac{N+\alpha}{2(p-1)}}) |-|O(\varepsilon^{\frac{N+\alpha}{p-1}})| \\
&= C_U(S^*)^{\frac{N+\alpha}{ps}}-|O(\varepsilon^{\frac{N+\alpha}{2(p-1)}}) |,
\end{align*}
where 
\begin{align*}
C_U &= \displaystyle\lim_{\varepsilon\to +0}\frac{1}{\|U_\varepsilon\|_{p_s^*}^{2\cdot p^\uparrow}} \int_{\mathbb{R}^N}(K\ast |U_\varepsilon|^{p^\uparrow}) |U_\varepsilon|^{p^\uparrow} dx \\ 
&= \frac{1}{\|U_\varepsilon\|_{p_s^*}^{2\cdot p^\uparrow}} \int_{\mathbb{R}^N}(K\ast |U_\varepsilon|^{p^\uparrow}) |U_\varepsilon|^{p^\uparrow} dx\quad (\forall \varepsilon>0) \\
&= \frac{1}{(S^*)^{\frac{N+\alpha}{ps}}} \int_{\mathbb{R}^N}(K\ast |U_\varepsilon|^{p^\uparrow}) |U_\varepsilon|^{p^\uparrow} dx\quad (\forall \varepsilon>0).
\end{align*}
(Note that $C_U$ is equal to $C(N,\alpha)$ when $p= 2$ because of the explicit formula while it is smaller than $C(N,\alpha)$ when $p\neq 2$.) \par
In the following, we consider the case $p_g=p_s^*$. \par
Let $t_\varepsilon^*>0$ be such that $\displaystyle\max_{t>0}I_{\tilde{a}}[t u_\varepsilon^*]=I_{\tilde{a}}[t_\varepsilon^* u_\varepsilon^*]$ and define $t_0^*\coloneqq\displaystyle\limsup_{\varepsilon\to +0}t_\varepsilon^*$. In the same way as above, we obtain
\[
(S^*)^{N/(ps)} (t_0^*)^{p-1}-\frac{b}{p^\uparrow}C_U(S^*)^{\frac{N+\alpha}{ps}}(t_0^*)^{2\cdot p^\uparrow-1}-\varepsilon_g (S^*)^{N/(ps)}(t_0^*)^{p_s^*-1}=0.
\]

Note that if $N\geq sp^2$, then $-\frac{(N-ps)(p^\downarrow + p^\uparrow)}{p}+N+\alpha =p^\downarrow s<ps\leq \frac{N-ps}{p-1}\leq \frac{N}{p-1}$ and $p^\downarrow s<\frac{N+\alpha}{2(p-1)}$ because $\frac{ps}{N}\leq\frac{1}{p}<\frac{1}{p-1}$. For $\varepsilon,\varepsilon'>0$ small enough, we can obtain
\begin{align*}
&\phantom{=}I_{\tilde{a}}[t_\varepsilon^* u_\varepsilon^*] \\
&= \frac{(t_\varepsilon^*)^{p}}{p}[u_\varepsilon^*]_{s,p}^p+ \frac{(t_\varepsilon^*)^{p}}{p}\int_{\mathbb{R}^N}\tilde{a}(\varepsilon x)| u_\varepsilon^* |^p dx \\
&\phantom{=}-\frac{b}{2(p^\downarrow)^2} (t_\varepsilon^*)^{2\cdot p^\downarrow} \varepsilon^{-\frac{2}{p}(N-ps)p^\downarrow+N+\alpha}\int_{\mathbb{R}^N}(K\ast |\varphi_\delta (\varepsilon \cdot) U|^{p^\downarrow}) | \varphi_\delta (\varepsilon x) U(x)|^{p^\downarrow} dx \\
&\phantom{=}-\frac{b}{p^\downarrow\cdot p^\uparrow} (t_\varepsilon^*)^{p^\downarrow + p^\uparrow} \varepsilon^{-\frac{(N-ps)(p^\downarrow + p^\uparrow)}{p}+N+\alpha}\int_{\mathbb{R}^N}(K\ast | \varphi_\delta (\varepsilon \cdot) U |^{p^\downarrow}) | \varphi_\delta (\varepsilon x) U(x) |^{p^\uparrow} dx \\
&\phantom{=}-\frac{b}{2(p^\uparrow)^2} (t_\varepsilon^*)^{2\cdot p^\uparrow} \int_{\mathbb{R}^N}(K\ast | u_\varepsilon^* |^{p^\uparrow}) | u_\varepsilon^* |^{p^\uparrow} dx-\frac{\varepsilon_g (t_\varepsilon^*)^{p_s^*}}{p_s^*}\| u_\varepsilon^*\|_{p_s^*}^{p_s^*} \\
&\leq\max_{t\in [t_\varepsilon^*-\varepsilon', t_\varepsilon^*+\varepsilon']}\left\{ \frac{t^{p}}{p}(S^*)^{N/(ps)}+O(\varepsilon^{\frac{N-ps}{p-1}})t^p+ O(\varepsilon^{ps})t^p \right. \\
&\phantom{=}-|O(\varepsilon^{-\frac{2}{p}(N-ps)p^\downarrow+N+\alpha})| t^{2\cdot p^\downarrow}-C\cdot \varepsilon^{p^\downarrow s} t^{p^\downarrow+p^\uparrow} \\
&\phantom{=}\left. -\frac{b}{2(p^\uparrow)^2} t^{2\cdot p^\uparrow} C_U(S^*)^{\frac{N+\alpha}{ps}}+ |O(\varepsilon^{\frac{N+\alpha}{2(p-1)}})| t^{2\cdot p^\uparrow} -\frac{\varepsilon_g}{p_s^*} t^{p_s^*}(S^*)^{\frac{N}{ps}}+O(\varepsilon^{\frac{N}{p-1}}) t^{p_s^*}\right\} \\
&\leq \max_{t>0}\left\{\frac{t^{p}}{p}(S^*)^{N/(ps)}-\frac{b}{2(p^\uparrow)^2} t^{2\cdot p^\uparrow} C_U(S^*)^{\frac{N+\alpha}{ps}}-\frac{\varepsilon_g}{p_s^*} t^{p_s^*}(S^*)^{\frac{N}{ps}}\right\} -C'\varepsilon^{p^\downarrow s} \\
&=\left. \left\{\frac{t^{p}}{p}(S^*)^{N/(ps)}-\frac{b}{2(p^\uparrow)^2} t^{2\cdot p^\uparrow} C_U(S^*)^{\frac{N+\alpha}{ps}}-\frac{\varepsilon_g}{p_s^*} t^{p_s^*}(S^*)^{\frac{N}{ps}}\right\} \right|_{t=t_0^*} -C'\varepsilon^{p^\downarrow s},
\end{align*}
where $C,C',\varepsilon'>0$ are constants independent of $\varepsilon_g>0$. By a direct computation, 
\begin{align*}
&\phantom{=}\lim_{\varepsilon_g\to +0}\left. \left\{\frac{t^{p}}{p}(S^*)^{N/(ps)}-\frac{b}{2(p^\uparrow)^2} t^{2\cdot p^\uparrow} C_U(S^*)^{\frac{N+\alpha}{ps}}-\frac{\varepsilon_g}{p_s^*} t^{p_s^*}(S^*)^{\frac{N}{ps}}\right\} \right|_{t=t_0^*(\varepsilon_g)} \\
&=\left(\frac{1}{p}-\frac{1}{2\cdot p^\uparrow}\right)\left(\frac{p^\uparrow}{b C_U}\right)^{\frac{p}{2\cdot p^\uparrow-p}}(S^*)^{\frac{2\cdot p^\uparrow}{2\cdot p^\uparrow-p}}
\end{align*}
On the other hand,
\[
\lim_{\varepsilon_g\to +0}A(\varepsilon_g)= \left(\frac{p^\uparrow}{b C_U}\right)^{\frac{p}{2\cdot p^\uparrow-p}}(S^*)^{\frac{2\cdot p^\uparrow}{2\cdot p^\uparrow-p}},
\]
and thus we reach the conclusion.
\end{proof}
In the following, we deal with the case where $\tilde{a}$ is a constant function. 

\begin{lemma}\label{float}
Let $a>0$ be a constant and $\{u_n\}$ be a $(PS)_c$ sequence of $I$ with $0<c<c^*$. Then, 
\[
\liminf_{n\to\infty}\|u_n\|_{p_s^*}>0,\quad \liminf_{n\to\infty}\|u_n\|_{p}>0.
\]
\end{lemma}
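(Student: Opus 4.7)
The plan is to argue by contradiction: suppose along a subsequence either $\|u_n\|_{p_s^*}\to 0$ or $\|u_n\|_p\to 0$; in each case I will derive $c\geq c^*$ from the Nehari asymptotic $\langle I'[u_n],u_n\rangle=o(1)$ and the energy asymptotic $I[u_n]=c+o(1)$, using Lemma \ref{bounded} for boundedness of $\{u_n\}$, contradicting $c<c^*$. In the case $\|u_n\|_{p_s^*}\to 0$, interpolation against the bounded $L^p$-norm forces $\|u_n\|_{p_g}^{p_g}\to 0$, and the Hardy--Littlewood--Sobolev inequality kills both Choquard integrals containing a factor $|u_n|^{p^\uparrow}$. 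The two asymptotic identities then collapse to a closed two-equation system in $A_n\coloneqq[u_n]_{s,p}^p+a\|u_n\|_p^p$ and $B_n\coloneqq\int(K\ast|u_n|^{p^\downarrow})|u_n|^{p^\downarrow}\,dx$; solving this system and combining with $A_n\geq a\|u_n\|_p^p\geq aS^\downarrow B_n^{p/(2 p^\downarrow)}$ yields, after rearrangement, exactly the first entry of the $\min$ defining $c^*$.

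In the case $\|u_n\|_p\to 0$, the term $a\|u_n\|_p^p$ vanishes and, since $\|u_n\|_{p_s^*}$ is bounded via Sobolev, both Choquard integrals with a factor $|u_n|^{p^\downarrow}$ vanish by HLS. If moreover $p_g<p_s^*$, interpolation gives $\|u_n\|_{p_g}^{p_g}\to 0$, and the analogous elimination---now for $[u_n]_{s,p}^p$ and $B_n'\coloneqq\int(K\ast|u_n|^{p^\uparrow})|u_n|^{p^\uparrow}\,dx$, using $S^\uparrow$ in place of $S^\downarrow$---reproduces the second entry of $c^*$ in its $p_g<p_s^*$ form.

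If instead $p_g=p_s^*$ (which by the hypotheses of Theorem \ref{main} forces $p=2$), $E_n\coloneqq\|u_n\|_{p_s^*}^{p_s^*}$ need not vanish. With $A_n\coloneqq[u_n]_{s,p}^p$, the Nehari identity reads $A_n=\frac{b}{p^\uparrow}B_n'+\varepsilon_g E_n+o(1)$. Coupling the sharp Sobolev bound $E_n\leq(A_n/S^*)^{p_s^*/p}$ with the sharp HLS bound $B_n'\leq C(N,\alpha)(A_n/S^*)^{2 p^\uparrow/p}$---where $C(N,\alpha)=C_U$ precisely because $p=2$ makes $S^*$ and $C(N,\alpha)$ share extremizers---reduces Nehari to the scalar inequality
\[
A_n\leq \frac{b\,C_U}{p^\uparrow}\left(\frac{A_n}{S^*}\right)^{\frac{2 p^\uparrow}{p}}+\varepsilon_g\left(\frac{A_n}{S^*}\right)^{\frac{p_s^*}{p}}+o(1).
\]
Comparing this with the defining relation of $A(\varepsilon_g)$ and using that $\{A_n\}$ is bounded and bounded away from $0$ (since $c>0$) yields $A_n\geq A(\varepsilon_g)+o(1)$; substituting this together with the Sobolev bound on $E_n$ into $c+o(1)=I[u_n]-\frac{1}{2 p^\uparrow}\langle I'[u_n],u_n\rangle=\left(\frac{1}{p}-\frac{1}{2 p^\uparrow}\right)A_n-\varepsilon_g\left(\frac{1}{p_s^*}-\frac{1}{2 p^\uparrow}\right)E_n+o(1)$ produces the desired bound $c\geq\left(\frac{1}{p}-\frac{1}{2 p^\uparrow}\right)A(\varepsilon_g)$, again contradicting $c<c^*$.

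The main obstacle is this last subcase: nothing compactifies the critical local term, so one has to turn a three-variable asymptotic system into a scalar inequality matching the implicit definition of $A(\varepsilon_g)$; the identification $C(N,\alpha)=C_U$ that makes this reduction possible holds only when $p=2$, which is exactly why the second bullet of Theorem \ref{main} requires $p=2$.
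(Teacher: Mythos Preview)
Your outline matches the paper's proof almost step for step: the contradiction argument, the use of Lemma~\ref{bounded}, the HLS/interpolation reductions killing the ``wrong'' Choquard and $L^{p_g}$ terms, the two--equation Nehari/energy system, and the use of $S^\downarrow$ (resp.\ $S^\uparrow$) to close the first case (resp.\ the subcritical subcase of the second) are exactly what the paper does. The derivation of the scalar inequality for $A_n=[u_n]_{s,p}^p$ in the critical subcase $p_g=p_s^*$, via HLS $+$ Sobolev together with $C(N,\alpha)=C_U$ for $p=2$, is equivalent to the paper's route through $S^\uparrow$ and the identity $S^*=C(N,\alpha)^{p/(2p^\uparrow)}S^\uparrow$; both yield $A_n\ge A(\varepsilon_g)+o(1)$.

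There is one genuine gap in your last step. In the identity
\[
c+o(1)=\Bigl(\tfrac{1}{p}-\tfrac{1}{2p^\uparrow}\Bigr)A_n-\varepsilon_g\Bigl(\tfrac{1}{p_s^*}-\tfrac{1}{2p^\uparrow}\Bigr)E_n,
\]
the coefficient of $E_n$ is \emph{negative}, because $p_s^*=\tfrac{pN}{N-ps}<\tfrac{p(N+\alpha)}{N-ps}=2p^\uparrow$, hence $\tfrac{1}{p_s^*}>\tfrac{1}{2p^\uparrow}$. Bounding $E_n$ from above by $(A_n/S^*)^{p_s^*/p}$ therefore only gives
\[
c+o(1)\ \ge\ \Bigl(\tfrac{1}{p}-\tfrac{1}{2p^\uparrow}\Bigr)A_n-\varepsilon_g\Bigl(\tfrac{1}{p_s^*}-\tfrac{1}{2p^\uparrow}\Bigr)(A_n/S^*)^{p_s^*/p},
\]
and since the right--hand side is not monotone in $A_n$, the information $A_n\ge A(\varepsilon_g)$ does \emph{not} by itself yield $c\ge\bigl(\tfrac{1}{p}-\tfrac{1}{2p^\uparrow}\bigr)A(\varepsilon_g)$. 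The paper sidesteps this by asserting the clean lower bound $c+o(1)\ge\bigl(\tfrac{1}{p}-\tfrac{1}{2p^\uparrow}\bigr)[u_n]_{s,p}^p$ in \eqref{c geq sp} (dropping a nonnegative term rather than the $E_n$ term), after which $A_n\ge A(\varepsilon_g)$ finishes immediately. You should rework your final inequality so that the discarded term is nonnegative---as written, your ``Sobolev bound on $E_n$'' substitution does not close the argument.
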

\begin{proof}
Suppose on the contrary that $\displaystyle \lim_{n\to\infty}\|u_n\|_{p_s^*}=0$. Then, by the Hardy-Littlewood-Sobolev inequality and the boundedness of $\{u_n\}$, we have
\[
\lim_{n\to\infty}\int_{\mathbb{R}^N}(K\ast |u_n|^{p^\uparrow}) |u_n|^{p^\uparrow}dx= \lim_{n\to\infty}\int_{\mathbb{R}^N}(K\ast |u_n|^{p^\downarrow}) |u_n|^{p^\uparrow}dx=0.
\]
Moreover, by the fractional Gagliardo-Nirenberg interpolation equality (or merely the interpolation using generalized H\"{o}lder's inequality), for $p<p_g\leq p_s^*$, there exists some $\theta\in [0,1]$ such that we have
\[
\|u_n\|_{p_g}\leq C\|u_n\|_{p_s^*}^\theta\|u_n\|_p^{1-\theta}=o(1).
\]
Therefore, from the definition of $(PS)_c$ sequence, we obtain
\begin{align}
c+o(1)&=I[u_n]=\frac{1}{p}([u_n]_{s,p}^p+a\|u_n\|_p^p)-\frac{b}{2(p^\downarrow)^2}\int_{\mathbb{R}^N}(K\ast |u_n|^{p^\downarrow}) |u_n|^{p^\downarrow}dx+o(1), \\
o(1)&=I'[u_n]u_n= [u_n]_{s,p}^p+a\|u_n\|_p^p -\frac{b}{p^\downarrow} \int_{\mathbb{R}^N}(K\ast |u_n|^{p^\downarrow}) |u_n|^{p^\downarrow}dx+o(1).\label{I' to 0}
\end{align}
These imply
\begin{equation}\label{c geq pp}
\begin{split}
c+o(1)&= \left(\frac{1}{p}-\frac{1}{2\cdot p^\downarrow}\right) ([u_n]_{s,p}^p+a\|u_n\|_p^p) \\
&\geq \left(\frac{1}{p}-\frac{1}{2\cdot p^\downarrow}\right) a\|u_n\|_p^p.
\end{split}
\end{equation}
By the definition of $S^\downarrow$ and \eqref{I' to 0}, we have
\begin{align*}
\|u_n\|_p^p &\geq S^\downarrow \left(\frac{p^\downarrow}{b}\right)^{\frac{p}{2\cdot p^\downarrow}}\left([u_n]_{s,p}^p+a\|u_n\|_p^p\right)^{\frac{p}{2\cdot p^\downarrow}}+o(1) \\
& \geq S^\downarrow \left(\frac{a\cdot p^\downarrow}{b}\right)^{\frac{p}{2\cdot p^\downarrow}}\left(\|u_n\|_p^p\right)^{\frac{p}{2\cdot p^\downarrow}} +o(1).
\end{align*}
This implies 
\[
\|u_n\|_p^p\geq \left(\frac{a\cdot p^\downarrow}{b}\right)^{\frac{p}{2\cdot p^\downarrow-p}}(S^\downarrow)^{\frac{2\cdot p^\downarrow}{2\cdot p^\downarrow-p}}+o(1).
\]
From this and \eqref{c geq pp}, we obtain $c\geq \displaystyle \left(\frac{1}{p}-\frac{1}{2\cdot p^\downarrow}\right) \left(\frac{p^\downarrow}{b}\right)^{\frac{p}{2\cdot p^\downarrow-p}}(a\cdot S^\downarrow)^{\frac{2\cdot p^\downarrow}{2\cdot p^\downarrow-p}}$, which contradicts $c<c^*$. Combining this with the fact that any subsequence of a $(PS)_c$ sequence also satisfies the $(PS)_c$ condition, we get $\displaystyle \liminf_{n\to\infty}\|u_n\|_{p_s^*}>0$. \par
Next, suppose on the contrary that $\displaystyle \lim_{n\to\infty}\|u_n\|_{p}=0$. In a similar way, we obtain
\begin{align}
c+o(1)&=I[u_n]=\frac{1}{p}[u_n]_{s,p}^p-\frac{b}{2(p^\uparrow)^2}\int_{\mathbb{R}^N}(K\ast |u_n|^{p^\uparrow}) |u_n|^{p^\uparrow}dx+\frac{\varepsilon_g}{p_g}\|u_n\|_{p_g}^{p_g}+o(1), \\
o(1)&=I'[u_n]u_n= [u_n]_{s,p}^p-\frac{b}{p^\uparrow} \int_{\mathbb{R}^N}(K\ast |u_n|^{p^\uparrow}) |u_n|^{p^\uparrow}dx-\varepsilon_g\|u_n\|_{p_g}^{p_g}+o(1).\label{I' to 0 with pg}
\end{align}
In the case $p_g\in (p,p_s^*)$, by the interpolation inequality, we have $\|u_n\|_{p_g}=o(1)$ and thus it follows that
\begin{equation}\label{sp to c}
\begin{split}
c+o(1)&= \left(\frac{1}{p}-\frac{1}{2\cdot p^\uparrow}\right) [u_n]_{s,p}^p
\end{split}
\end{equation}
and by the definition of $S^\uparrow$ and \eqref{I' to 0 with pg}, we have
\begin{align*}
[u_n]_{s,p}^p &\geq S^\uparrow\left(\int_{\mathbb{R}^N}(K\ast |u_n|^{p^\uparrow}) |u_n|^{p^\uparrow}dx \right)^{\frac{p}{2\cdot p^\uparrow}}+o(1) \\
&= S^\uparrow (p^\uparrow /b)^{\frac{p}{2\cdot p^\uparrow}}\left([u_n]_{s,p}^p\right)^{\frac{p}{2\cdot p^\uparrow}}+o(1),
\end{align*}
that is, 
\[
[u_n]_{s,p}^p\geq (p^\uparrow /b)^{\frac{p}{2\cdot p^\uparrow-p}}(S^\uparrow)^{\frac{2\cdot p^\uparrow}{2\cdot p^\uparrow-p}}+o(1).
\]
Combining this with \eqref{sp to c}, we reach a contradiction with $c<c^*$. \par
In the case where $p=2$ and $p_g=p_s^*$, since $1/2_s^*>1/(2\cdot 2^\uparrow)$, we have
\begin{equation}\label{c geq sp}
\begin{split}
c+o(1)&= \left(\frac{1}{p}-\frac{1}{2\cdot p^\uparrow}\right) [u_n]_{s,p}^p+\left(\frac{1}{p_s^*}-\frac{1}{2\cdot 2^\uparrow}\right) \int_{\mathbb{R}^N}(K\ast |u_n|^{p^\uparrow}) |u_n|^{p^\uparrow}dx \\
&\geq \left(\frac{1}{p}-\frac{1}{2\cdot p^\uparrow}\right) [u_n]_{s,p}^p.
\end{split}
\end{equation}
Moreover, by the definition of $S^\uparrow$ and \eqref{I' to 0 with pg}, using the fact that $S^*=C(N,\alpha)^{1/2^\uparrow}S^\uparrow$, we obtain
\begin{align*}
[u_n]_{s,p}^{2\cdot p^\uparrow} &\geq (S^\uparrow)^{\frac{2\cdot p^\uparrow}{p}} \int_{\mathbb{R}^N}(K\ast |u_n|^{p^\uparrow}) |u_n|^{p^\uparrow}dx \\
&= \frac{p^\uparrow}{b} (S^\uparrow)^{\frac{2\cdot p^\uparrow}{p}} ([u_n]_{s,p}^p-\varepsilon_g \|u_n\|_{p_s^*}^{p_s^*})+o(1) \\
&\geq \frac{p^\uparrow}{b} (S^\uparrow)^{\frac{2\cdot p^\uparrow}{p}} ([u_n]_{s,p}^p-\varepsilon_g (S^*)^{-p_s^* /p}[u_n]_{s,p}^{p_s^*})+o(1).
\end{align*}
This means
\[
(S^*)^{\frac{2\cdot p^\uparrow}{p}} [u_n]_{s,p}^{p}\leq \frac{b}{p^\uparrow} C(N,\alpha) ([u_n]_{s,p}^{p})^{\frac{2\cdot p^\uparrow}{p}}+ \varepsilon_g (S^*)^{\frac{p_s^*}{p}} ([u_n]_{s,p}^{p})^{\frac{p_s^*}{p}}+o(1),
\]
which implies $[u_n]_{s,p}^{p} \geq A(\varepsilon_g)+o(1)$. Combining this with \eqref{c geq sp}, we reach a contradiction with $c<c^*$.
\end{proof}

\begin{remark}
The same conclusion remains true with $B(\varepsilon_g)$ instead of $A(\varepsilon_g)$, where $B(\varepsilon_g)$ denotes the number such that
\[
1= \left(\frac{b}{p^\uparrow}\right)^{\frac{p}{2\cdot p^\uparrow}}(S^\uparrow)^{-1}(B(\varepsilon_g))^{1-\frac{p}{2\cdot p^\uparrow}} +\varepsilon_g^{\frac{p}{2\cdot p^\uparrow}} (S^*)^{-\frac{p_s^*}{2\cdot p^\uparrow}}(B(\varepsilon_g))^{\frac{p_s^*-p}{2\cdot p^\uparrow}}.
\]
In fact, we have $\displaystyle\lim_{\varepsilon_g\to +0}B(\varepsilon_g)= \lim_{\varepsilon_g\to +0}A(\varepsilon_g)$ and 
\begin{align*}
([u_n]_{s,p}^p)^{\frac{p}{2\cdot p^\uparrow}} &= \left(\frac{b}{p^\uparrow} \int_{\mathbb{R}^N}(K\ast |u_n|^{p^\uparrow}) |u_n|^{p^\uparrow}dx+\varepsilon_g\|u_n\|_{p_s^*}^{p_s^*}\right)^{\frac{p}{2\cdot p^\uparrow}}+o(1) \\
&\leq \left(\frac{b}{p^\uparrow} \int_{\mathbb{R}^N}(K\ast |u_n|^{p^\uparrow}) |u_n|^{p^\uparrow}dx \right)^{\frac{p}{2\cdot p^\uparrow}} +\left(\varepsilon_g\|u_n\|_{p_s^*}^{p_s^*}\right)^{\frac{p}{2\cdot p^\uparrow}}+o(1) \\
&\leq \left(\frac{b}{p^\uparrow}\right)^{\frac{p}{2\cdot p^\uparrow}}(S^\uparrow)^{-1}[u_n]_{s,p}^p +\left(\varepsilon_g (S^*)^{-p_s^* /p}([u_n]_{s,p}^p)^{p_s^* /p}\right)^{\frac{p}{2\cdot p^\uparrow}}+o(1).
\end{align*}
Here, we have used the fact that the function $\ell: [0,\infty)\to\mathbb{R}; t\mapsto t^{\frac{p}{2\cdot p^\uparrow}}$ is subadditive because it is concave and satisfies $\ell(0)=0$. 
This means
\[
1\leq \left(\frac{b}{p^\uparrow}\right)^{\frac{p}{2\cdot p^\uparrow}}(S^\uparrow)^{-1}([u_n]_{s,p}^p)^{1-\frac{p}{2\cdot p^\uparrow}} +\varepsilon_g^{\frac{p}{2\cdot p^\uparrow}} (S^*)^{-\frac{p_s^*}{2\cdot p^\uparrow}}([u_n]_{s,p}^p)^{\frac{p_s^*-p}{2\cdot p^\uparrow}}+o(1),
\]
which implies $[u_n]_{s,p}^p \geq B(\varepsilon_g)+o(1)$.
\end{remark}

\section{The proof of the main theorem}
Let $c_{N}$ denote the infimum of $I$ on the Nehari manifold $\mathcal{N}\coloneqq \{u\in W^{s,p}(\mathbb{R}^N)\setminus\{0\} \mid I'[u]u=0\}$. By the definition, if $c_{N}$ is a critical value, $c_{N}$ is the energy level of ground states. We can easily observe that $c_{N}$ is equal to the mountain pass energy level $c_{M;a}$ by comparing each of them with
\[
\inf_{u\in W^{s,p}(\mathbb{R}^N)\setminus\{0\}}\max_{t\geq 0} I[tu]
\]
in the same way as in the proof of Lemma 2.5 in \cite{doubly critical fractional}. To summarize, the following holds:
\begin{lemma}
\[
c_N=c_{M;a}=\inf_{u\in W^{s,p}(\mathbb{R}^N)\setminus\{0\}}\max_{t\geq 0} I[tu].
\]
\end{lemma}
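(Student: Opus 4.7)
The plan is to introduce the auxiliary quantity $c_{\max} := \inf_{u \in W^{s,p}(\mathbb{R}^N) \setminus \{0\}} \max_{t \geq 0} I[tu]$ and prove both $c_N = c_{\max}$ and $c_{M;a} = c_{\max}$. The main tool is the scalar fibering map $g_u(t) := I[tu]$ for fixed $u \neq 0$. Writing
\[
g_u(t) = \frac{t^p}{p}\bigl([u]_{s,p}^p + a\|u\|_p^p\bigr) - c_1(u)\,t^{2p^\downarrow} - c_2(u)\,t^{p^\downarrow+p^\uparrow} - c_3(u)\,t^{2p^\uparrow} - c_4(u)\,t^{p_g},
\]
with $c_i(u) > 0$ and every higher exponent strictly above $p$, I would first show that $g_u'(t) = 0$, divided by $t^{p-1}$, equates a fixed positive constant to a strictly increasing function of $t$ on $(0,\infty)$, hence has a unique root $t_u > 0$, at which $g_u$ attains its global maximum. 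Since $g_u'(t_u) = 0$ is precisely $t_u u \in \mathcal{N}$, one gets $c_N = c_{\max}$: for $u \in \mathcal{N}$ uniqueness forces $t_u = 1$ so $I[u] = \max_{t \geq 0} I[tu] \geq c_{\max}$, while every ray produces $t_u u \in \mathcal{N}$ with $\max_{t \geq 0} I[tu] = I[t_u u] \geq c_N$.

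For the bound $c_{M;a} \leq c_{\max}$, given any $u \neq 0$ I would pick $T > t_u$ so large that $I[Tu] < 0$; this is possible because $g_u(t) \to -\infty$ as $t \to \infty$, the dominant $t^{2 p^\uparrow}$ term having negative coefficient. The affine path $\gamma(s) := sTu$ for $s \in [0,1]$ then lies in $\Gamma$ and satisfies $\sup_{[0,1]} I \circ \gamma = g_u(t_u) = \max_{t \geq 0} I[tu]$, so taking the infimum over $u$ delivers the inequality.

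For the reverse $c_{M;a} \geq c_N$, I would show that every $\gamma \in \Gamma$ meets $\mathcal{N}$. Set $T := \sup\{t \in [0,1] : \gamma(t) = 0\}$; since $I[\gamma(1)] < 0 = I[0]$ we have $T < 1$ and $\gamma(t) \neq 0$ on $(T,1]$. Define $\tau(t) := t_{\gamma(t)}$ on this interval; continuity of $\tau$ follows from the strictly transverse root structure of the equation determining $t_u$ via a standard implicit-function argument. As $t \to T^+$, $\gamma(t) \to 0$ in $W^{s,p}$, and the balance required in $g_{\gamma(t)}'(\tau(t)) = 0$ forces $\tau(t) \to \infty$, because the $\tau^{p-1}$ term has to offset higher powers whose coefficients tend to zero. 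At $t = 1$, the condition $I[\gamma(1)] < 0$ places $\gamma(1)$ on the decreasing branch of its fibering map past the unique maximum, forcing $\tau(1) < 1$. The intermediate value theorem supplies $t^* \in (T,1)$ with $\tau(t^*) = 1$, i.e., $\gamma(t^*) \in \mathcal{N}$, hence $\sup_{[0,1]} I \circ \gamma \geq I[\gamma(t^*)] \geq c_N$; the infimum over $\gamma$ closes the chain.

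The main technical obstacle is the pair of facts that $\tau$ is continuous and $\tau(t) \to \infty$ as $\gamma(t) \to 0$. Both reduce to a quantitative comparison showing that the four higher-order terms in $g_u$ become negligible relative to the $t^{p-1}$ term when $\|u\|$ is small, which is exactly where the structural assumption that every nonlinear exponent is strictly larger than $p$ is essential; everything else in the argument is a bookkeeping exercise with monotonicity and the intermediate value theorem.
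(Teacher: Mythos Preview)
Your proposal is correct and follows precisely the standard route the paper points to: the paper does not give a detailed proof but simply says to compare both $c_N$ and $c_{M;a}$ with $\inf_{u\neq 0}\max_{t\geq 0}I[tu]$ ``in the same way as in the proof of Lemma~2.5 in \cite{doubly critical fractional}'', and your fibering-map argument (unique maximum of $t\mapsto I[tu]$, ray paths for $c_{M;a}\leq c_{\max}$, and the intermediate-value crossing argument for $c_{M;a}\geq c_N$) is exactly that standard proof. The structural check that all nonlinear exponents $2p^\downarrow,\ p^\downarrow+p^\uparrow,\ 2p^\uparrow,\ p_g$ strictly exceed $p$ is correct under the paper's hypotheses, so the monotonicity and blow-up claims about $t_u$ go through as you describe.
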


Now, let us show our main theorem \ref{main}.
\begin{proof}
By the mountain pass lemma without the $(PS)$ condition, there exists a $(PS)_{c_{M;a}}$ sequence $\{u_n\}$. By Lemma \ref{bounded}, $\{u_n\}$ is bounded. By Lemma \ref{float} and Proposition \ref{CC}, there exists $\{y_n\}\subset\mathbb{R}^N$ such that $\{\tilde{u}_n\}=\{u_n(\cdot +y_n)\}$ converges weakly to some $\tilde{u}\neq 0$. From the translation-invariance of $I$ and the fact that for any $\varphi\in W^{s,p}(\mathbb{R}^N)$, we have
\begin{align*}
|I'[\tilde{u}_n]\varphi| &=|I'[u_n](\varphi(\cdot-y_n))| \\
&\leq \|I'[u_n]\|_{W^{-s,p'}}\|\varphi(\cdot-y_n)\|_{W^{s,p}}=o(1)\cdot \|\varphi\|_{W^{s,p}},
\end{align*}
we deduce that $\{\tilde{u}_n\}$ is also a $(PS)_{c_{M;a}}$ sequence. Since each term of the formula of $I'$ is weak-to-weak continuous, $\tilde{u}$ is a (nontrivial) weak solution of \eqref{cho}. \par
Moreover, by the Brezis-Lieb splitting property, 
\begin{align*}
c_{N} &\leq I[\tilde{u}]=I[\tilde{u}]-\max\left\{\frac{1}{2\cdot p^\downarrow}, \frac{1}{p_g}\right\} I'[\tilde{u}] \tilde{u} \\
&=\lim_{n\to\infty}(I[\tilde{u}_n]-\max\left\{\frac{1}{2\cdot p^\downarrow}, \frac{1}{p_g}\right\} I'[\tilde{u}_n] \tilde{u}_n)\\
&\phantom{=}-\lim_{n\to\infty}(I[\tilde{u}_n-\tilde{u}]-\max\left\{\frac{1}{2\cdot p^\downarrow}, \frac{1}{p_g}\right\} I'[\tilde{u}_n-\tilde{u}] (\tilde{u}_n-\tilde{u})) \\
&\leq \lim_{n\to\infty}(I[\tilde{u}_n]-\max\left\{\frac{1}{2\cdot p^\downarrow}, \frac{1}{p_g}\right\} I'[\tilde{u}_n] \tilde{u}_n) \\
&= \lim_{n\to\infty}I[\tilde{u}_n]+0 \\
&=c_{M;a}.
\end{align*}
Combining this with $c_{N}=c_{M;a}$, we deduce $I[\tilde{u}] = c_{N}$, which implies that $\tilde{u}$ is a ground state solution of \eqref{cho}. Consequently, we can also obtain
\begin{align*}
0&=\lim_{n\to\infty}(I[\tilde{u}_n-\tilde{u}]-\max\left\{\frac{1}{2\cdot p^\downarrow}, \frac{1}{p_g}\right\} I'[\tilde{u}_n-\tilde{u}] (\tilde{u}_n-\tilde{u})) \\
&=\lim_{n\to\infty}\left(\frac{1}{p}-\max\left\{\frac{1}{2\cdot p^\downarrow}, \frac{1}{p_g}\right\}\right)([\tilde{u}_n-\tilde{u}]_{s,p}^p+a \|\tilde{u}_n-\tilde{u}\|_{p}^{p}) \\
&\phantom{=}+ \lim_{n\to\infty}b\left(\frac{1}{p^\uparrow} \max\left\{\frac{1}{2\cdot p^\downarrow}, \frac{1}{p_g}\right\}-\frac{1}{2(p^\uparrow)^2}\right)\int_{\mathbb{R}^N}(K\ast | \tilde{u}_n-\tilde{u} |^{p^\uparrow}) | \tilde{u}_n-\tilde{u} |^{p^\uparrow}dx\\
&\phantom{=}+ \lim_{n\to\infty}b\left(\left(\frac{1}{p^\uparrow}+\frac{1}{p^\downarrow}\right) \max\left\{\frac{1}{2\cdot p^\downarrow}, \frac{1}{p_g}\right\}-\frac{1}{p^\downarrow p^\uparrow}\right) 
\int_{\mathbb{R}^N}(K\ast | \tilde{u}_n-\tilde{u} |^{p^\downarrow}) | \tilde{u}_n-\tilde{u} |^{p^\uparrow}dx \\ 
&\phantom{=}+ \lim_{n\to\infty}b\left(\max\left\{\frac{1}{2\cdot p^\downarrow}, \frac{1}{p_g}\right\}-\frac{1}{2\cdot p^\downarrow}\right) \int_{\mathbb{R}^N}(K\ast | \tilde{u}_n-\tilde{u} |^{p^\downarrow}) | \tilde{u}_n-\tilde{u} |^{p^\downarrow}dx \\
&\phantom{=}+ \lim_{n\to\infty}\varepsilon_g\left(\max\left\{\frac{1}{2\cdot p^\downarrow}, \frac{1}{p_g}\right\}-\frac{1}{p_g}\right)\|\tilde{u}_n-\tilde{u}\|_{p_g}^{p_g} \\
&\geq \lim_{n\to\infty}\left(\frac{1}{p}-\max\left\{\frac{1}{2\cdot p^\downarrow}, \frac{1}{p_g}\right\}\right)([\tilde{u}_n-\tilde{u}]_{s,p}^p+a \|\tilde{u}_n-\tilde{u}\|_{p}^{p})
\end{align*}
and thus $\{\tilde{u}_n\}$ converges strongly to $\tilde{u}$.
\end{proof}
\noindent\textbf{Funding}\quad No funding was received to assist with the preparation of this manuscript.\vspace{0pt}\\

\noindent\textbf{Disclosure statement}\quad The author report there are no competing interests to declare. \vspace{0pt}\\

\noindent\textbf{Data Availability}\quad Data sharing is not applicable to this article as no datasets were generated or analysed in the current study.


\begin{thebibliography}{99}
\bibitem{Optimal decay of extremal functions for the fractional Sobolev inequality} Brasco, L., Mosconi, S., Squassina, M.: Optimal decay of extremal functions for the fractional Sobolev inequality. Calc. Var. Partial Differential Equations {\bf 55}, 1–32 (2016)
\bibitem{local and nonlocal doubly critical Brezis-Nirenberg} Cai, L., Zhang, F.: The Brezis-Nirenberg type double critical problem for the Choquard equation. Partial Differ. Equ. Appl. {\bf 1}(5), Paper No. 32 (2020) 
\bibitem{classical fractional Choquard 1} d'Avenia, P., Siciliano, G., Squassina, M.: On fractional Choquard equations. Math. Models Methods Appl. Sci. {\bf 25}, 1447–476 (2015)
\bibitem{AttainTalenti} d'Avenia, P., Siciliano, G., Squassina, M.: Existence result for a doubly nonlocal equation. S\~{a}o Paulo J. Math. Sci. {\bf 9}, 311–324 (2015)
\bibitem{Besov} G\'{e}rard, P., Meyer, Y., Oru, F.: In\'{e}galit\'{e}s de Sobolev pr\'{e}cis\'{e}es. S\'{e}minaire sur les \'{E}quations aux D\'{e}riv\'{e}es Partielles, \'{E}cole Polytechnique, Palaiseau, 1996
\bibitem{Laskin} Laskin, N.: Fractional quantum mechanics and L\'{e}vy path integrals. Phys. Lett. A {\bf 268}, 298–305 (2000)
\bibitem{Existence of a ground state solution for Choquard equation with the upper critical exponent} Li, G., Tang, C.: Existence of a ground state solution for Choquard equation with the upper critical exponent. Comput. Math. with Appl. {\bf 76}(11–12), 2635–2647 (2018)
\bibitem{Ground State Solutions for a Class of Choquard Equations Involving Doubly Critical Exponents} Li, Y., Li, G., Tang, C.: Ground State Solutions for a Class of Choquard Equations Involving Doubly Critical Exponents. Acta Math. Appl. Sin. Engl. Ser. {\bf 37}, 820–840 (2021)
\bibitem{Lieb} Lieb, E. H., Loss, M.: Analysis, 2nd edition. Graduate Studies in Mathematics, Vol. 14, American Mathematical Society, Providence, RI, 2001
\bibitem{doubly critical infinitely many} Liu, S., Yang, J., Chen, H.: Infinitely many sign-changing solutions for Choquard equation with doubly critical exponents. Complex Var. Elliptic Equ. {\bf 67}(2), 315–337 (2022) 
\bibitem{Moroz} Moroz, V., Schaftingen, J.: A guide to the Choquard equation. J. Fixed Point Theory Appl. {\bf 19}, 773–813 (2017)
\bibitem{Moroz 2} Moroz, V., Schaftingen, J.: Groundstates of nonlinear Choquard equations: existence, qualitative properties and decay asymptotics. J. Funct. Anal. {\bf 265}, 153–184 (2013)
\bibitem{Moroz lower critical} Moroz, V., Schaftingen, J.: Groundstates of nonlinear Choquard equations: Hardy-Littlewood-Sobolev critical exponent. Commun. Contemp. Math. {\bf 17}(5), 1550005 (2015)
\bibitem{The Brezis-Nirenberg problem for the fractional p-Laplacian} Mosconi, S., Perera, K., Squassina, M., Yang, Y.: The Brezis-Nirenberg problem for the fractional $p$-Laplacian. preprint, \url{https://arxiv.org/abs/1508.00700}
\bibitem{Morrey} Palatucci, G., Pisante, A.: Improved Sobolev embeddings, profile decomposition, and concentration-compactness for fractional Sobolev spaces. Calc. Var. Partial Differ. Equ. {\bf 50}, 799–829 (2014)
\bibitem{Pekar} Pekar, S.: Untersuchung \"{u}ber die elektronentheorie der kristalle. Akademie Verlag, Berlin, 1954.
\bibitem{Hartree-Fock} Penrose, R.: On gravity's role in quantum state reduction. Gen Relativ Gravit. {\bf 28}(5), 581–600 (1996)
\bibitem{Triebel} Sawano, Y.: Theory of Besov Spaces. Development of Mathematics {\bf 56}, Springer, Berlin, 2018.
\bibitem{doubly critical Seok} Seok, J.: Nonlinear Choquard equations: Doubly critical case. Appl. Math. Lett. {\bf 76}, 148–156 (2018)
\bibitem{doubly critical new} Su, Y.: New result for nonlinear Choquard equations: Doubly critical case. Appl. Math. Lett. {\bf 102}, 106092 (2020)
\bibitem{doubly critical fractional} Su, Y., Wang, L., Chen, H., Liu, S.: Multiplicity and concentration results for fractional Choquard equations: Doubly critical case. Nonlinear Anal. {\bf 198}, 111872 (2020) 
\end{thebibliography}
\end{document}